\documentclass[11pt]{article}

\usepackage[a4paper]{geometry}
\geometry{left={2.5cm}, right={2.5cm}, top={2.5cm}, bottom={2.5cm}}
\usepackage[show]{ed}   

\usepackage{amsthm}   
\usepackage{xcolor}
\usepackage{url}
\usepackage{bm}

\newtheorem{theorem}{Theorem}[section]
\newtheorem{assumption}[theorem]{Assumption}
\newtheorem{corollary}[theorem]{Corollary}
\newtheorem{definition}[theorem]{Definition}
\newtheorem{notation}[theorem]{Notation}

\newtheorem{lemma}[theorem]{Lemma}

\newtheorem{proposition}[theorem]{Proposition}
\newtheorem{remark}[theorem]{Remark}


\usepackage{}
\usepackage{graphicx}
\usepackage{multirow,bigstrut}
\usepackage{amsmath,amsfonts,amssymb}
\usepackage{mathrsfs}
\usepackage{color}
\usepackage{upgreek}
\usepackage{bm}
\usepackage{verbatim}
\usepackage{mathtools}
\usepackage{calligra}
\usepackage{latexsym}
\usepackage{color}
\usepackage{soul}
\usepackage{authblk}
\usepackage{bm}
\usepackage{subcaption}

\usepackage{verbatim}
\usepackage{exscale}
\usepackage{relsize}

\usepackage{tikz}
\usetikzlibrary{shapes,arrows}

\usepackage{authblk}
\usepackage[colorlinks=true,allcolors=purple]{hyperref}

\newcommand{\assign}{\mathrel{\mathop:}=}

\numberwithin{equation}{section}
\newcommand\wt{\widetilde}
\newcommand\RR{\mathbb R}

\DeclareMathOperator{\diverg}{div}
\DeclareMathOperator{\supp}{supp}
\DeclareMathOperator{\Int}{Int}
\DeclareMathOperator{\linspan}{span}
\DeclareMathOperator{\dof}{dof}
\newcommand{\ovdof}{\overline{\dof}}
\newcommand{\rd}{\mathrm{d}}

\newcommand\CC{\mathbb C}

\newcommand{\Cstab}{C_{\textup{\textsf{stab}}}}
\newcommand{\amin}{a_{\textup{\textsf{min}}}}
\newcommand{\amax}{a_{\textup{\textsf{max}}}}

\renewcommand{\kappa}{k}

\begin{document}


\title{Schwarz preconditioner with $H_k$-GenEO coarse space for the indefinite Helmholtz problem}

\author{Victorita Dolean\protect\footnote{CASA, TU Eindhoven; 
    \texttt{v.dolean.maini@tue.nl}},
  Mark Fry\protect\footnote{Department of Mathematics and
    Statistics, University of Strathclyde, Glasgow, G1 1XH, UK;
    \texttt{mark.fry@strath.ac.uk, m.langer@strath.ac.uk}},
  Ivan G.~Graham\protect\footnote{Department of Mathematical Sciences, 
    University of Bath, Bath, BA2 7AY, UK; \texttt{i.g.graham@bath.ac.uk}},
  Matthias Langer$^\dag$
 }

\date{} 

\maketitle

\begin{abstract}
\noindent
GenEO (`Generalised Eigenvalue problems on the Overlap') is a method from the family of spectral coarse spaces that 
can efficiently rely on local eigensolves in order to build a robust parallel domain decomposition preconditioner for elliptic PDEs. 
When used as a preconditioner in a conjugate gradient, this method is extremely efficient in the positive-definite case, 
yielding an iteration count completely independent of the number of subdomains and heterogeneity.  
In a previous work this theory was extended to the cased of convection--diffusion--reaction problems, 
which may be non-self-adjoint and indefinite, and whose discretisations are solved with preconditioned GMRES. 
The GenEO coarse space was then defined here using a generalised eigenvalue problem based on a self-adjoint 
and positive definite subproblem.  The resulting method, called $\Delta$-GenEO becomes robust with respect to the 
variation of the coefficient of the diffusion term in the operator and depends only very mildly on variations 
of the other coefficients. However, the iteration number estimates get worse as the non-self-adjointness 
and indefiniteness of the operator increases, which is often the case for the high frequency Helmholtz problems. 
In this work, we will improve on this aspect by introducing a new version, called $H_k$-GenEO, which uses a 
generalised eigenvalue problem based directly on the indefinite operator which will lead to a robust method 
with respect to the increase in the wave-number.  We provide theoretical estimates showing the dependence of 
the size of the coarse space on the wave-number.  The new method is entirely robust with respect to $k$ and this is confirmed by numerical results.
\end{abstract}

\noindent
\textbf{MSC Classes:} \texttt{65N22, 65N55, 65F10}
\\[2ex]
\noindent
\textbf{Keywords:} elliptic PDEs, finite element methods, domain decomposition, preconditioning, GMRES, Helmholtz, spectral coarse space, robustness

\section{Introduction}
This paper is concerned with the theory and implementation of two-level domain decomposition preconditioners for finite element  
systems arising from the indefinite version of Helmholtz boundary value problems with highly variable coefficients defined 
on $\Omega \subset \mathbb{R}^d$, given by:

\begin{subequations}
\label{eq:problem}
	\begin{alignat}{2}
		-\diverg(A \nabla u) - k^2 u &= f \qquad && \text{in } \Omega, \\
		u &= 0 \qquad && \text{on } \partial\Omega.
	\end{alignat}
\end{subequations}
The computational domain is supposed to be polygonal or Lipschitz polyhedral domain $\Omega\subset \mathbb{R}^{d}$ ($d=2,3$). 
As well as a weak regularity assumption on the coefficient matrix $A$ which is positive definite, we will assume that
\eqref{eq:problem} has a unique weak solution $u\in H^1(\Omega)$ for all $f \in L^2(\Omega)$. 
A key parameter when solving this problem is the wavenumber $k$; the indefiniteness of the problem increases with $k$, 
becoming thus more challenging for linear solvers.

In overlapping domain  decomposition methods, the global domain $\Omega$ is covered by a set of
overlapping subdomains~$\Omega_i$, $i=1,\ldots,N$, and the classical one-level additive Schwarz preconditioner
is built from partial solutions on each subdomain.  Since this preconditioner is, in general, not scalable 
as the number of subdomains grows, an additional global coarse solve is usually added to enhance scalability, 
as well as robustness with respect to coefficient heterogeneity or the increase in the wavenumber.

\bigskip

\noindent 
\textbf{Aim of the paper and literature review.}
Coarse spaces are instrumental in achieving robustness and scalability in domain decomposition methods. 
It is now well known and commonly accepted that classical coarse spaces (piecewise polynomials on a coarse grid, 
e.g., \cite{Cai:1999:RAS,Toselli:2005:DDM}) are not robust for highly heterogeneous problems. 
For that reason, different authors introduced operator-dependent coarse spaces, which are better adapted to the heterogeneity;
see for example \cite{Graham:2007:DDM}.  In the past decade a new idea has emerged introducing the powerful concept 
of `spectral' coarse spaces, which are now built from well-chosen modes of local generalised eigenvalue problems defined at a subdomain level.
The use of spectral information for the solution of multiscale PDEs, as a tool of achieving robust approximation techniques, 
is not new \cite{Babuska:2011:OLA,Efendiev:2013:GMF}, but in domain decomposition preconditioning this was introduced in \cite{Galvisa:2010:DDP,Galvisb:2010:DDP}.  

This paper focuses on a variant of the GenEO coarse space, which was introduced for the first time in \cite{Spillane:2014:ARC} 
for general coercive, self-adjoint systems of PDEs.  In this seminal paper, it was rigorously proved that 
(when combined with a one-level method and using preconditioned conjugate gradients as a Krylov iterative solver) 
the resulting algorithm is not only scalable with respect to the number of subdomains but also robust with respect to coefficient variation.   
More recent works include for example \cite{Agullo:2019:RPG, Heinlein:2019:AGC,Bastian:2021:MSD} and \cite{Spillane:2021:TFA}, 
while more complete lists of contributions can be found in the literature surveys in, e.g., \cite{Spillane:2014:ARC} and \cite{Galvis:2018:ODD}.

While the theory for the above methods is very well established for SPD (symmetric positive definite) problems 
since the behaviour and the properties of local eigenfunctions are entirely known, for the indefinite or non-self-adjoint problems, 
the progress has not been slower but mostly numerical.  As an example for Helmholtz, the first publications on spectral coarse space 
include \cite{Conen:2014:ASH,Bootland:2019:OTD}, while a thorough comparison of the state-of-the-art coarse spaces can be found in \cite{Bootland:2021:ACS}. 
In this work, the resulting two level preconditioners are compared for large scale high-frequency Helmholtz problems. 
From this comparison it results that one of the most promising coarse spaces for Helmholtz problems
(also called `$H$-GenEO' in \cite{Bootland:2021:ACS,Bootland:2022:GCS}) has good robustness properties at high frequency, 
but, because of the non-self-adjoint nature of the operators involved, the theory was out of reach. 
Later on, the similarity between $H$-GenEO and $\Delta$-GenEO method observed numerically at lower frequencies in \cite{Bootland:2022:GCS} 
motivated the theoretical work in \cite{Bootland:2022:OSM} where the authors studied how well the GenEO technology works 
(both in theory and in practice) for general  non-self-adjoint and indefinite PDEs of the form \eqref{eq:delta_problem}:
\begin{subequations}
\label{eq:delta_problem}
	\begin{alignat}{2}
		-\diverg(A \nabla u) + \mathbf{b}\cdot \nabla u + cu &= f \qquad && \text{in } \Omega, \\
		u &= 0 \qquad && \text{on } \partial\Omega.
	\end{alignat}
\end{subequations}
The $\Delta$-GenEO method constructs a coarse space by solving a generalised eigenvalue problem using a \textit{nearby} SPD problem. 
Whilst the work cited has gone a long way to show that the GenEO coarse space provides an improvement for solving an SPD problem, 
and the $\Delta$-GenEO coarse space provides a similar improvement with a non-self-adjoint and indefinite problem, 
both of these scenarios are constructing the coarse space using an SPD operator.
The work being carried out in this paper is to develop the theory for the two-level domain decomposition preconditioner, 
using the $H_\kappa$-GenEO coarse space, which is adaptation of the $H$-GenEO coarse space first mentioned in \cite{Bootland:2021:ACS}. 
The $H_\kappa$-GenEO coarse space differs from previous methods as it uses the full problem operator to build the coarse space. 
Because of this, the usual estimates of the spectral properties are no longer applicable.  

After discretisation by finite elements, the linear systems arising from \eqref{eq:problem} are symmetric but indefinite. 
For this reason, we use GMRES as the iterative solver, the convergence analysis relies on the `Elman theory'
(\cite{Elman:1983:VIM} or \cite[Lemma C.11]{Toselli:2005:DDM}), which requires an upper bound for the norm of
the preconditioned matrix and a lower bound on the distance of its field of values from the origin. 
As a result, the number of GMRES iterations to achieve a given error tolerance is a function of these two bounds. 
Thus, the dependence of these bounds on the parameters of the problem and of the preconditioner 
(e.g., mesh size, subdomain size, overlap, heterogeneity, and wave-number) is of paramount interest 
when analysing the robustness and scalability of the preconditioner.
The use of the Elman theory in domain decomposition goes back to Cai and Widlund \cite{Cai:1992:DDA} (see also \cite[Section~11]{Toselli:2005:DDM}). \\

\noindent
\textbf{The main results of this paper.}  
In our work, we have used two different sizes of decomposition. One fine decomposition which is nested inside a larger coarse decomposition. Where in previous studies, the same size was used for both the fine and coarse decompositions. Our main theoretical result, Theorem~\ref{theorem: convergence}, provides rigorous and $k$-explicit upper bounds on the fine mesh
diameter $H^f$ and on the `eigenvalue tolerance'~$\tau$ (for the local GenEO eigenproblems) which ensure
that GMRES enjoys robust and mesh-independent convergence when applied to the preconditioned problem. 
We give here an example of the impact of these quantities on GMRES: if we reduce $H^f$, we increase the number of subdomains 
and these domains will become smaller. Not to mention that even if the solution exhibits 
higher frequencies in the global domain, on a fraction of it, the problem becomes simpler, low frequency or might even become positive definite. 
The other quantity, $\tau$ is an indicator of how many eigenfunctions are incorporated into the coarse space. 
Optimal choices of $\tau$ and $H^f$ are very often determined empirically but here we provide, for the first time, explicit limits depending on $k$. 

Since the result of the paper is quite technical, we summarise it here when applied to \eqref{eq:problem} and show the improvement 
with respect to the method introduced in \cite{Bootland:2022:OSM}. 
GenEO coarse spaces are usually based on the $m_i$ (dominant) eigenfunctions corresponding to the smallest
eigenvalues $\lambda^i_1 \le \lambda^i_2 \le \ldots \le \lambda_{m_i}^i$ of the generalised eigenvalue problem on the subdomain $\Omega_i$.  
To obtain a robust rate of convergence for GMRES that depends only on $\Lambda$ 
(the maximum number of times any point is overlapped by the subdomains $\Omega^f_{i,j}$), we need some conditions on $H^f$ and 
the eigenvalue tolerance $\tau \assign \min_{i=1}^N  \lambda^i_{m_i+1}$, where $N$ is the number of subdomains in the coarse mesh. 
In the equations below, $\Cstab>0$ is the stability constant for the problem \eqref{eq:problem} and is
defined in Assumption~\ref{Ass: 2_3}.  It blows up as the distance of $k^2$ from an eigenvalue of the Dirichlet problem for $-\diverg(A \nabla u)$ decreases.
The hidden constants usually depend only on $\Lambda$ and $H = H^c = H^f$ is the diameter of the mesh used.

\begin{itemize}

\item 
In \cite[Theorem~4.1]{Bootland:2022:OSM} it is proved that a robust rate is achieved if
\begin{equation}\label{eq:bdLam0}
    H \lesssim k^{-2} \qquad \text{and} \qquad  (1+\Cstab)^2 \,k^{8} \lesssim \tau.
\end{equation}
\item 
In \cite{Dolean:2024:ITE} the authors show that the bounds from \eqref{eq:bdLam0} can be improved:

\begin{equation}
    \label{eq:delta_results_improved}
	H \lesssim \kappa^{-1} \qquad \text{and} \qquad (1 + \Cstab)^2 \kappa^4 \lesssim \tau.
\end{equation}
\end{itemize}
In this paper, we show that the conditions for robustness are:
\begin{equation}
\label{Hk_results}
	H^f \lesssim \kappa^{-1} \qquad \text{and} \qquad (1 + \Cstab)^2 \kappa^2 \lesssim \tau.
\end{equation}
We thus show that, even though the theoretical bounds for the $\Delta$-GenEO given in \eqref{eq:delta_results_improved} 
are already an improvement with respect to the results in \cite{Bootland:2022:OSM}, $H_\kappa$-GenEO is still able to lead 
to better estimates than the $\Delta$-GenEO coarse space.  Using the $H_\kappa$-GenEO coarse space means that a 
significantly lower bound on $\tau$ can be used over the $\Delta$-GenEO. By using two separate decompositions, we have also been able to remove one of the constraints on the coarse space. The number of eigenvalues required for the coarse space no longer has any requirements on the size of the coarse mesh.

In all experiments from \cite{Bootland:2022:OSM} the condition on the eigenvalue tolerance in \eqref{eq:bdLam0} appears to be overly pessimistic, 
and the coarse space sizes necessary to achieve full robustness are rather moderate in practical computations.




\section{Useful Background}

\subsection{Problem formulation and discretisation}
The weak formulation of \eqref{eq:problem} is to find $u \in H^1_0(\Omega)$ such that 
\begin{equation}
\label{weak_form}
	b(u,v) = (f,v) \qquad \text{for all} \ v \in H^1_0(\Omega),
\end{equation}
where $f \in L^2(\Omega)$ and $b(\cdot,\cdot): H^1_0(\Omega) \times H^1_0(\Omega) \rightarrow \mathbb{R}$ is defined as 
\begin{equation*}
	b(u,v) = \int_\Omega (A\nabla u \cdot \nabla v - \kappa^2 uv)\,\rd x.
\end{equation*}
We shall be making use of the positive definite bilinear form $a(\cdot,\cdot): H^1_0(\Omega) \times H^1_0(\Omega) \rightarrow \mathbb{R}$,
\begin{equation*}
	a(u,v) = \int_\Omega A\nabla u \cdot \nabla v \,\rd x.
\end{equation*}
If $a$ and $b$ are defined on a subdomain $\Omega'$ of $\Omega$, we employ the notations $a_{\Omega'}$ and $b_{\Omega'}$.
The following weak regularity assumptions are used throughout this work.

\begin{assumption}\label{ass: 2_1}
	The coefficient $A$ and the right-hand side $f$ in problem \eqref{eq:problem} satisfy
	the following assumptions.
	\begin{enumerate}
		\item[\textup{(i)}] 
		$A: \Omega \rightarrow \mathbb{R}^{d \times d}$ is symmetric with $0 < a_{\textup{min}} \le a_{\textup{max}}$, s.t.\
		\begin{equation}
			\amin|\bm{\xi}|^2 \le A(x)\bm{\xi} \cdot {\bm \xi} \le \amax|\bm{\xi}|^2 
			\qquad \text{for all} \ x \in \Omega, \bm{\xi} \in \mathbb{R}^d.
		\end{equation}
		\item[\textup{(ii)}] 
		Without loss of generality, $\amin=1$ and the diameter, $D_\Omega$, of the domain $\Omega$ 
		satisfies $D_\Omega \le 1$.  \textup{(}If this is not the case, then the problem can be scaled accordingly.\textup{)}
	\end{enumerate}
\end{assumption}

\begin{notation}
For any subdomain $\Omega' \subset \Omega$ we are using $(\cdot,\cdot)_{\Omega'}$ to denote the $L^2(\Omega')$ inner product
and $\|\cdot\|_{\Omega}$ to denote the norm.  When $\Omega'= \Omega$, we write $(\cdot,\cdot)$ and $\|\cdot\|$ for the inner product 
and the norm respectively.  We also introduce the norm induced by the positive bilinear form $a$ and write $\|u\|_{a,\Omega'} \assign \sqrt{a_{\Omega'}(u,u)}$. 
When $\Omega'= \Omega$ we abandon the subscript $\Omega$.  We see that $b(u,v) = a(u,v) - \kappa^2 (u,v)$. 
We are also using the $\kappa$-inner product given by
\begin{equation*}
    (u,v)_{1, \kappa,\Omega'} \assign a_{\Omega'}(u,v) + \kappa^2 (u,v)_{\Omega'},
\end{equation*}
and we denote the induced $\kappa$-norm by $\|u\|_{1, \kappa, \Omega'}$.
\end{notation}

In all what follows solvability of \eqref{weak_form} is assumed.

\begin{assumption}
\label{Ass: 2_3}
We assume that, for any $f\in L^2(\Omega)$, the problem \eqref{weak_form} has a unique solution $u \in H_0^1(\Omega)$ and 
there exists a constant $\Cstab>0$ such that 
\begin{equation}
\label{eq: 2_10}
	\|u\|_{1,\kappa}  \le \Cstab \|f\| \qquad \text{for all} \ f\in L^2(\Omega). 
\end{equation}
\end{assumption}

The bilinear forms $a(\cdot,\cdot )$ and $(\cdot,\cdot)_{1,\kappa}$ are SPD; $b(\cdot,\cdot)$ is symmetric, but in general indefinite.

Let $\mathcal{T}_h$ be any shape regular triangular mesh over the domain $\Omega$.  For the purpose of this work, 2 or 3-dimensional simplices 
are considered, but this could easily be applied to $d$-dimensional simplices, where the maximum diameter is $h$. 
Let $V^h \subset H^1_0(\Omega)$ be any conforming finite element space.
The Galerkin approximation of \eqref{weak_form} is to find $u_h \in V^h$ such that 
\begin{equation}
\label{eq: 2_11}
	b(u_h,v) = (f,v) \qquad \text{for all} \ v \in V^h.
\end{equation}
If $n$ denotes the dimension of $V^h$, with a basis given by $\lbrace \phi_i \rbrace_{i=1}^n$, then (\ref{eq: 2_11}) can be represented by the linear system
\begin{equation}
\label{eq: 2_12}
	\mathbf{B}\mathbf{u} = \mathbf{f};
\end{equation}
the matrix $\mathbf{B}$ and the vector $\mathbf{f}$ are defined in terms of the basis functions: 
$(\mathbf{B})_{ij} \assign b(\phi_j, \phi_i)$ and $(\mathbf{f})_i \assign (f, \phi_i)$. 
Later we also need the matrices $\mathbf{A}$ and $\mathbf{S}$ that correspond to $a$ and $(\cdot,\cdot)$, 
namely, $(\mathbf{A})_{ij} \assign a(\phi_j, \phi_i)$ and $(\mathbf{S})_{ij} \assign (\phi_j, \phi_i)$ respectively. 

The solvability of \eqref{eq: 2_11} is assured by the following lemma from \cite[Theorem~2]{Schatz:1996:SNE}. 
This is required due to the indefiniteness of \eqref{eq: 2_11}.

\begin{lemma}[Schatz and Wang, 1996]
\label{schatz_wang}
Let Assumptions \ref{ass: 2_1} and \ref{Ass: 2_3} hold.  Then there exists an $h_0>0$ such that, for each $h$ with $0 < h < h_0$,
the problem \eqref{eq: 2_11} has a unique solution $u_h \in V_H$.  
Moreover, let $u$ be the unique solution of \eqref{weak_form}.  Then, for every $\varepsilon>0$ there exists $h_1=h_1(\varepsilon)>0$
such that, for every $h\in(0,h_1)$,
\begin{equation}
\label{eq: 3_13_a}
	\|u - u_h\| \le \varepsilon \|u - u_h\|_{{H^1}(\Omega)}
\end{equation}
and
\begin{equation}
\label{eq: 2_13}
	\|u - u_h\|_{H^1(\Omega)} \le \varepsilon \|f\|.
\end{equation}
\end{lemma}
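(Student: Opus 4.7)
The plan is to carry out a Schatz-type duality argument, adapted to the symmetric indefinite bilinear form~$b$. The central object is the adjoint approximability quantity
\[
\eta(h) \assign \sup_{g \in L^2(\Omega) \setminus \{0\}} \; \inf_{v_h \in V^h} \frac{\|z_g - v_h\|_{1,\kappa}}{\|g\|},
\]
where $z_g \in H_0^1(\Omega)$ is the solution of the dual problem $b(v, z_g) = (g, v)$ for all $v \in H_0^1(\Omega)$. Since $b$ is symmetric, Assumption~\ref{Ass: 2_3} applied to the dual gives $\|z_g\|_{1,\kappa} \le \Cstab \|g\|$; combined with the approximation properties of the conforming family $\{V^h\}_{h>0}$, I would argue that $\eta(h) \to 0$ as $h \to 0$.

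First I would prove existence and uniqueness of $u_h$ by establishing injectivity of $b$ on $V^h \times V^h$. If $w \in V^h$ satisfies $b(w, v_h) = 0$ for every $v_h \in V^h$, then the duality identity with $g = w$, combined with this $V^h$-orthogonality, yields
\[
\|w\|^2 = b(w, z_w) = b(w, z_w - v_h) \le \|w\|_{1,\kappa}\, \|z_w - v_h\|_{1,\kappa}
\]
for every $v_h \in V^h$, where the last inequality is just Cauchy--Schwarz applied to $a(\cdot,\cdot)$ and to $(\cdot,\cdot)$. Taking the infimum gives $\|w\| \le \Cstab\,\eta(h)\,\|w\|_{1,\kappa}$. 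Combining this with the identity $0 = b(w,w) = \|w\|_a^2 - \kappa^2\|w\|^2$, which forces $\|w\|_{1,\kappa}^2 = 2\kappa^2\|w\|^2$, implies $w = 0$ as soon as $\eta(h)$ is small enough. Since $b|_{V^h\times V^h}$ is then injective on a finite-dimensional space, it is invertible, and one obtains an explicit threshold $h_0$ from the rate $\eta(h)\to 0$.

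Next I would derive the error bounds. Setting $e \assign u - u_h$, the same duality with $g = e$ and Galerkin orthogonality yields $\|e\| \le \Cstab\,\eta(h)\,\|e\|_{1,\kappa}$; since $\|e\|_{1,\kappa}$ is bounded by a $\kappa$-dependent multiple of $\|e\|_{H^1(\Omega)}$, this produces \eqref{eq: 3_13_a} once $h$ is taken small. To obtain quasi-optimality in the $\kappa$-norm I would start from $\|e\|_a^2 = b(e,e) + \kappa^2\|e\|^2 = b(e, u - v_h) + \kappa^2\|e\|^2$ for any $v_h \in V^h$ (using Galerkin orthogonality on $u_h - v_h \in V^h$), bound the first term by $\|e\|_{1,\kappa}\|u - v_h\|_{1,\kappa}$, and absorb the $\kappa^2\|e\|^2$ term into the left-hand side using the $L^2$ estimate just proved. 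After taking the infimum this yields $\|e\|_{1,\kappa} \le C \inf_{v_h \in V^h}\|u - v_h\|_{1,\kappa}$.

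To finish \eqref{eq: 2_13}, I would combine the quasi-optimality with the stability bound $\|u\|_{1,\kappa} \le \Cstab\|f\|$ and the approximation power of $V^h$. The main obstacle I anticipate is precisely this last step: passing from pointwise-in-$u$ convergence $\inf_{v_h}\|u - v_h\|_{1,\kappa} \to 0$ to the uniform-in-$f$ estimate $\|u - u_h\|_{H^1(\Omega)} \le \varepsilon\|f\|$ requires some compactness (or elliptic regularity) of the solution operator $f \mapsto u$ mapping $L^2(\Omega)$ into a space that embeds compactly into $H_0^1(\Omega)$, so that $\{u = Sf : \|f\| \le 1\}$ is precompact in $H^1$. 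This is the ingredient where the structure of \eqref{eq:problem} on the polygonal or polyhedral domain $\Omega$ really has to be invoked, and it is the only nontrivial analytic input beyond the otherwise routine duality bookkeeping.
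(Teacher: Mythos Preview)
The paper does not supply its own proof of this lemma; it simply cites \cite[Theorem~2]{Schatz:1996:SNE} and moves on. Your sketch is exactly the Schatz duality argument underlying that reference, and the logical skeleton (adjoint approximability $\Rightarrow$ discrete injectivity $\Rightarrow$ $L^2$ bound via Aubin--Nitsche $\Rightarrow$ quasi-optimality via a G{\aa}rding-type splitting) is correct.

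Two refinements. First, the compactness issue you flag at the end is not confined to the final step: establishing $\eta(h)\to 0$ \emph{uniformly} in $g$ already needs the same ingredient, since density of $\bigcup_h V^h$ in $H^1_0(\Omega)$ only gives pointwise convergence $\inf_{v_h}\|z_g-v_h\|_{1,\kappa}\to 0$ for each fixed $g$. You invoke $\eta(h)\to 0$ in the injectivity step, so the analytic input enters earlier than you indicate. Second, the hypothesis you anticipate needing is slightly stronger than what Schatz--Wang actually use. Their ``minimal regularity'' framework does \emph{not} require $\{Sf:\|f\|\le 1\}$ to be precompact in $H^1$ (that would demand elliptic regularity beyond $H^1$, which is unavailable for merely bounded measurable $A$). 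The argument runs instead on the compact embedding $H^1_0(\Omega)\hookrightarrow L^2(\Omega)$ alone: the lower-order term $-\kappa^2(u,v)$ is a compact perturbation of $a$, and a contradiction/subsequence argument using Rellich delivers the uniform-in-$f$ estimates \eqref{eq: 3_13_a} and \eqref{eq: 2_13}. So your plan is sound, but the analytic ingredient required is weaker than you state.
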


\medskip

\noindent
Let us also recall the Friedrichs inequality \cite[Theorem 13.19]{Leoni:2017:FCS}, which will be used extensively throughout this work.

\begin{lemma}[Friedrichs inequality]\label{lemma: 2.5}
Let $\Omega' \subset \mathbb{R}^d$ be an open set that lies between two parallel hyperplanes with and 
let $L$ be the distance between the two hyperplanes.  Then, for all $u \in H^1_0(\Omega')$, 
\begin{equation}
\label{eq: 2_15}
	\|u \|_{\Omega'} \le \frac{L}{\sqrt{2}\,} \|\nabla u\|_{\Omega'}. 
\end{equation}
\end{lemma}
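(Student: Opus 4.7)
The plan is to reduce to the familiar one-dimensional argument along the axis perpendicular to the two bounding hyperplanes, exploiting the boundary vanishing encoded by $H^1_0(\Omega')$. First I would choose an orthogonal coordinate system $(x_1, x')$ on $\mathbb{R}^d$ in which the two hyperplanes are $\{x_1 = 0\}$ and $\{x_1 = L\}$; this is permitted because the inequality is invariant under rigid motions and under extension of $u$ by zero outside $\Omega'$. After extending by zero, any $u \in H^1_0(\Omega')$ becomes an element of $H^1_0(S)$ on the strip $S = (0,L) \times \mathbb{R}^{d-1}$, and both sides of the inequality are unchanged by this extension.

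Next I would invoke density: it suffices to prove the inequality for $u \in C_c^\infty(S)$, since $C_c^\infty(\Omega')$ is dense in $H^1_0(\Omega')$ and both $\|\cdot\|_{\Omega'}$ and $\|\nabla\cdot\|_{\Omega'}$ are continuous with respect to the $H^1$-norm. For such a smooth compactly supported $u$, fix $x'$ and use the fundamental theorem of calculus, writing
\begin{equation*}
    u(x_1, x') = \int_0^{x_1} \partial_1 u(t, x')\,\rd t,
\end{equation*}
which is valid because $u$ vanishes on $\{x_1 = 0\}$. Cauchy--Schwarz then gives
\begin{equation*}
    |u(x_1, x')|^2 \le x_1 \int_0^L |\partial_1 u(t, x')|^2 \, \rd t.
\end{equation*}

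Integrating this first in $x_1$ over $(0,L)$ produces the factor $\int_0^L x_1\,\rd x_1 = L^2/2$, and then integrating in $x' \in \mathbb{R}^{d-1}$ yields
\begin{equation*}
    \int_S |u|^2\,\rd x \le \frac{L^2}{2} \int_S |\partial_1 u|^2 \, \rd x \le \frac{L^2}{2}\int_S |\nabla u|^2\,\rd x.
\end{equation*}
Taking square roots and transferring back to $\Omega'$ by restriction gives \eqref{eq: 2_15} with the sharp constant $L/\sqrt{2}$. There is no genuine obstacle in this proof; the only subtlety worth flagging is the density/extension-by-zero step, which is the reason the hypothesis $u\in H^1_0(\Omega')$ (and not merely $u\in H^1(\Omega')$) is essential, and which is why the one-dimensional trace $u(0,x')=0$ is well defined in the integral representation above.
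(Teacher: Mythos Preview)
Your argument is correct and is the standard one-dimensional reduction: after a rigid motion and extension by zero, apply the fundamental theorem of calculus in the direction normal to the bounding hyperplanes, use Cauchy--Schwarz to pick up the factor $x_1$, and integrate to obtain the constant $L^2/2$. The density step and the use of $u\in H^1_0(\Omega')$ to justify the vanishing trace are handled cleanly.

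Note, however, that the paper does not actually supply a proof of this lemma: it simply recalls the inequality and cites \cite[Theorem~13.19]{Leoni:2017:FCS}. So there is no ``paper's own proof'' to compare against; your write-up is a self-contained derivation of a result the authors take as known.
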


Combining \eqref{eq: 2_15} with Assumption~\ref{ass: 2_1} we obtain that, for any subdomain $\Omega' \subset \Omega$ with diameter $H$, 
the following estimate is true:
\begin{equation} \label{eq: 2_16}
	\|u\|_{\Omega'} \le \frac{H}{\sqrt{2}\,} \|\nabla u\|_{\Omega'} \le \frac{H}{\sqrt{2}\,} \|u\|_{1,k,\Omega'} 
	\qquad \text{for all} \ u \in H^1_0(\Omega').
\end{equation}

\subsection{Abstract spectral theory for the generalised indefinite eigenvalue problems}

Due to the nature of the problem being analysed, and the type of spectral coarse space we are going to develop, we will be dealing with indefinite eigenvalue problems. The following lemma demonstrates certain properties that we will be able to exploit.

\begin{lemma}\label{lem_ind_ev}
Let $B$ and $C$ be self-adjoint operators acting on a finite dimensional Hilbert space $V$, with symmetric forms defined as
\[
	\texttt{b}(v,w) := (Bv,w), \qquad c(v,w) := (Cv,w),
\]
and
\begin{equation}\label{E601}
	C \ge 0, \qquad \ker B\cap\ker C=\{0\}.
\end{equation}
Consider the generalised eigenvalue problem
\begin{equation}\label{ev_equ500}
	Bv = \lambda Cv.
\end{equation}
The generalised eigenvalue problem \eqref{ev_equ500} is non defective, i.e. it
has a full set of eigenvectors, and any $v \in V$ can be written as 
\begin{equation}\label{E606}
	v = v_0 + \sum_{i=1}^r \alpha_i p_i.
\end{equation}

\end{lemma}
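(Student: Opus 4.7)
The plan is to split off the kernel of $C$ by an orthogonal Schur-complement reduction and thereby convert \eqref{ev_equ500} into a standard self-adjoint generalised eigenvalue problem with a positive-definite weight, to which the classical simultaneous-diagonalisation theorem applies. Let $W := \ker C$ and $U := W^\perp$ in the ambient Hilbert-space inner product, so that $V = W \oplus U$. Because $C$ is self-adjoint and vanishes on $W$, it leaves $U$ invariant, and $C_U := C|_U$ is positive definite on $U$. Writing
$$
B \;=\; \begin{pmatrix} B_W & B_{WU} \\ B_{WU}^* & B_U \end{pmatrix},
\qquad
C \;=\; \begin{pmatrix} 0 & 0 \\ 0 & C_U \end{pmatrix},
$$
with $B_W$ and $B_U$ self-adjoint, the equation $Bv = \lambda Cv$ for $v = v_W + v_U$ splits into the $\lambda$-free constraint $B_W v_W + B_{WU} v_U = 0$ together with the $U$-block equation $B_{WU}^* v_W + B_U v_U = \lambda C_U v_U$.

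Next, I would exploit the hypothesis \eqref{E601} to invert the block $B_W$ on $W$, eliminate $v_W = -B_W^{-1} B_{WU} v_U$ from the constraint, and substitute into the $U$-equation to reach the reduced problem $\tilde B v_U = \lambda C_U v_U$ on $U$, where $\tilde B := B_U - B_{WU}^* B_W^{-1} B_{WU}$ is the Schur complement and remains self-adjoint on $U$. Since $C_U > 0$, conjugating by $C_U^{-1/2}$ reduces this further to an ordinary self-adjoint eigenvalue problem; the usual spectral theorem then delivers a $C_U$-orthogonal basis $v_U^{(1)}, \ldots, v_U^{(r)}$ of $U$, with $r := \dim U$, and real eigenvalues $\lambda_1, \ldots, \lambda_r$.

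Finally, lift each $v_U^{(i)}$ back to $V$ by setting $p_i := -B_W^{-1} B_{WU} v_U^{(i)} + v_U^{(i)}$ and verify $Bp_i = \lambda_i C p_i$ directly from the block decomposition. Linear independence of the $U$-components forces the $p_i$'s themselves to be linearly independent and their span $P$ to satisfy $P \cap W = \{0\}$; combined with $\dim W + r = \dim V$, this gives $V = W \oplus P$ and hence the stated decomposition $v = v_0 + \sum_{i=1}^r \alpha_i p_i$ with $v_0 \in W = \ker C$. The delicate step I expect to be the main obstacle is the invertibility of the compression $B_W = \pi_W B|_W$: condition \eqref{E601} directly provides only injectivity of $B|_W : W \to V$, whereas the Schur-complement manipulation needs injectivity of $\pi_W B|_W : W \to W$. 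Bridging this gap — perhaps via an extra observation exploiting the self-adjoint and positive-semidefinite structure, or by interpreting the $p_i$ in \eqref{E606} slightly more generously than as ordinary finite eigenvectors — is where the real technical effort will lie.
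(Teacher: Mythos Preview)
Your Schur-complement strategy is genuinely different from the paper's argument, and the obstacle you flag at the end is not a detail to be ironed out but a real failure mode of the approach. The compression $B_W=\pi_W B|_W$ can be singular even when \eqref{E601} holds. Take $V=\mathbb{R}^2$, $C=\mathrm{diag}(0,1)$ and $B=\bigl(\begin{smallmatrix}0&1\\1&0\end{smallmatrix}\bigr)$: then $\ker B\cap\ker C=\{0\}$, yet $B_W=0$, so no Schur complement exists. Worse, the pencil $(B,C)$ has \emph{no} finite eigenpairs at all, so $r=0$ and the decomposition $v=v_0+\sum\alpha_ip_i$ cannot hold with $v_0\in\ker C$, which is only one-dimensional. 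Thus your reading of $v_0$ as lying in $\ker C$ is too restrictive; in this example the ``$v_0$-space'' must be all of $V$.

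The paper avoids the issue by a shift-and-invert manoeuvre rather than a block elimination. From \eqref{E601} one can choose $\gamma\ge0$ so that $\widetilde B:=B+\gamma C$ is invertible, and then $Bv=\lambda Cv$ is equivalent (for $v\ne0$) to $Uv=\mu v$ with $U:=\widetilde B^{-1}C$ and $\mu=(\lambda+\gamma)^{-1}$. The operator $U$ is symmetric with respect to the (indefinite) form $\widetilde b(v,w)=(\widetilde Bv,w)$, and the paper shows its non-zero eigenvalues are real and semi-simple, while eigenvectors at distinct eigenvalues are $\widetilde b$- and $c$-orthogonal. The space carrying $v_0$ is then the full \emph{algebraic} eigenspace $X_0$ of $U$ at $0$, which in general strictly contains $\ker C$ (in the $2\times2$ example above, $U=\bigl(\begin{smallmatrix}0&1\\0&0\end{smallmatrix}\bigr)$ and $X_0=V$). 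With this interpretation the decomposition \eqref{E606} follows from the Jordan decomposition of $U$.

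So: your plan would work cleanly in the special case where $B_W$ happens to be invertible (and would then give a slightly sharper $v_0\in\ker C$), but it does not cover the general statement. The missing idea is precisely the shift $B\mapsto B+\gamma C$, which replaces your need to invert a compression of $B$ by the need to invert $B+\gamma C$ on all of $V$ --- and that \emph{is} guaranteed by \eqref{E601}.
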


\begin{proof}
    It follows from \eqref{E601} that there exists $\gamma\ge0$ such that
    \[
        \wt B \assign B+\gamma C
    \]
    is invertible.  For $v\ne0$ we have the following equivalences
    \begin{equation}\label{E602}
        Bv = \lambda Cv \quad\Leftrightarrow\quad (B+\gamma C)v = (\lambda+\gamma)Cv
        \quad\Leftrightarrow\quad \wt B^{-1}Cv = \frac{1}{\lambda+\gamma}c;
    \end{equation}
    note that $\lambda+\gamma\ne0$ if the second equality holds.
    Set
    \[
        U \assign \wt B^{-1}C, \qquad \mu \assign \frac{1}{\lambda+\gamma}.
    \]
    Then the relations in \eqref{E602} are equivalent to $Uv=\mu v$.
    
    Now set
    \[
        \wt b(v,w) \assign (\wt Bv,w), \qquad c(v,w) \assign (Cv,w).
    \]
    Then
    \begin{equation}\label{E603}
        \wt b(Uv,w) = (\wt B\wt B^{-1}Cv,w) = (Cv,w) = c(v,w)
    \end{equation}
    and hence
    \[
        \wt b(Uv,w) = (Cv,w) = (v,Cw) = (v,\wt B\wt B^{-1}Cw) = (\wt Bv,Uw) = \wt b(v,Uw),
    \]
    i.e.\ $U$ is symmetric with respect to the form $\wt b$.
    Since the form $c$ is positive semi-definite, we can apply the Cauchy--Schwarz inequality to obtain
    \begin{equation}\label{E604}
        |c(v,w)|^2 \le c(v,v)c(w,w).
    \end{equation}
    
    Now assume that $Uv=\mu v$ with $\mu\ne0$ and $v\ne0$.  It follows from \eqref{E603} and \eqref{E604} that, 
    for arbitrary $w$,
    \begin{align*}
        |\mu\wt b(v,w)|^2 &= |\wt b(\mu v,w)|^2 = |\wt b(Uv,w)|^2 = |c(v,w)|^2
        \le c(v,v)c(w,w)
        \\[1ex]
        &= \wt b(Uv,v)c(w,w) = \mu\wt b(v,v)c(w,w).
    \end{align*}
    Since $\wt B$ is invertible, we can find $w$ such that $\wt b(v,w)\ne0$, which implies that
    \[
        \mu\wt b(v,v)c(w,w) > 0.
    \]
    Since $\wt b(v,v)c(w,w)\in\RR$, this yields 
    \begin{equation}\label{E605}
        \mu \in \RR, \qquad \wt b(v,v) \ne 0,
    \end{equation}
    and consequently,
    \[
        c(v,v) = \wt b(Uv,v) = \mu\wt b(v,v) > 0.
    \]
    In particular, all eigenvalues of $U$ are real.
    
    Next we show that the non-zero eigenvalues are semi-simple.  Assume that we have a Jordan chain; 
    then there exist $v\ne0$, $\mu\ne0$ and $\hat v$ such that
    \[
        Uv = \mu v, \qquad U\hat v-\mu\hat v = v.
    \]
    It follows that
    \[
        \wt b(v,v) = \wt b(U\hat v-\mu\hat v,v) = \wt b(U\hat v,v)-\mu\wt b(\hat v,v)
        = \wt b(\hat v,Uv)-\mu\wt b(\hat v,v) = 0,
    \]
    which contradicts \eqref{E605}.  Hence $\mu$ is a semi-simple eigenvalue.
    
    Let $\mu_1,\mu_2$ be eigenvalues of $U$ with $\mu_1\ne\mu_2$ and eigenvectors $v_1,v_2$.  Then
    \[
        \mu_1\wt b(v_1,v_2) = \wt b(\mu_1v_1,v_2) = \wt b(Uv_1,v_2) = \wt b(v_1,Uv_2)
        = \wt b(v_1,\mu_2v_2) = \mu_2\wt b(v_1,v_2),
    \]
    which implies that $\wt b(v_1,v_2)=0$.  From this we obtain
    \[
        c(v_1,v_2) = \wt b(Uv_1,v_2) = \mu_1\wt b(v_1,v_2) = 0
    \]
    and hence $(Bv_1,v_2)=0$.
    
    Now let $v_0,v_1,\ldots,v_l$ be a Jordan chain corresponding to the eigenvalue $0$, i.e.\
    \[
        Uv_0 = 0, \quad Uv_1 = v_0, \;\ldots,\; Uv_l = v_{l-1},
    \]
    and let $w$ be an eigenvector corresponding to the eigenvalue $\mu\ne0$.  Then
    \[
        \mu^j\wt b(v_j,w) = \wt b(v_j,\mu^j w) = \wt b(v_j,U^jw) = \wt b(U^jv_j,w) = \wt b(v_0,w) = 0,
    \]
    and hence also $c(v_j,w)=0$.

    Let $X_1$ be the linear span of all eigenvectors corresponding to non-zero eigenvalues and 
    let $X_0$ be the algebraic eigenspace corresponding to the eigenvalue $0$.
    Further, let us normalise the eigenvectors corresponding to non-zero eigenvalues such that
    \[
        c(p_i,p_j) = \delta_{ij}.
    \]
    
    Every $v\in V$ can be written as
    \begin{equation}
        v = v_0 + \sum_{i=1}^r \alpha_i p_i
    \end{equation}
    with $v_0\in X_0$ and $\alpha_i\in\CC$.
\end{proof}

We can now define projection operators that satisfy suitable stability estimates. These projection operators are crucial in the analysis of the proposed preconditioner. 

\begin{lemma}\label{lem_abs_proj}
    Let $\texttt{b}$ and $c$ be as defined in Lemma \ref{lem_ind_ev}, where the eigenvalues, $(\lambda_l)_{l=1}^n$, and corresponding eigenvectors, $(p_l)_{l=1}^n$ are from the generalised eigenvalue problem \eqref{ev_equ500}. Then, for any $m\in\{r+1,\ldots,n-s\}$ such that $\lambda_{m+1}>0$, the local projector, $\Pi$, defined as
    \begin{equation}\label{eq_abs_proj}
        \Pi v = v_0 + \sum_{l=1}^m \alpha_l p_l, \qquad v\in V,
    \end{equation}
    satisfies
    \begin{equation}
        c(v-\Pi v,v-\Pi v) \le \frac{1}{\lambda_{m+1}} \texttt{b}(v-\Pi v,v-\Pi v).
    \end{equation}
\end{lemma}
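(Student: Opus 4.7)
The plan is to obtain the bound by diagonalising both forms in the eigenbasis established in Lemma~\ref{lem_ind_ev} and then applying an elementary scalar inequality termwise.  Writing $v = v_0 + \sum_l \alpha_l p_l$ using \eqref{E606}, the definition \eqref{eq_abs_proj} of $\Pi$ shows that the first $m$ modes and the entire $X_0$-part of $v$ cancel in the difference, leaving
\[
    v - \Pi v = \sum_{l > m} \alpha_l p_l.
\]
In particular, no $v_0$-contribution survives in $v-\Pi v$, so I need not worry about any generalised-eigenspace / Jordan structure at the zero eigenvalue when evaluating $c$ and $\texttt{b}$ on $v-\Pi v$.

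Two orthogonality facts then do all the work.  The normalisation $c(p_i,p_j) = \delta_{ij}$ was fixed at the end of Lemma~\ref{lem_ind_ev}; combining this with the generalised eigenvalue relation $Bp_i = \lambda_i C p_i$ gives
\[
    \texttt{b}(p_i, p_j) \;=\; (Bp_i, p_j) \;=\; \lambda_i (Cp_i, p_j) \;=\; \lambda_i\, c(p_i, p_j) \;=\; \lambda_i\, \delta_{ij}.
\]
Inserting the expansion of $v-\Pi v$ into each of the two forms produces the diagonal representations
\[
    c(v-\Pi v,\, v-\Pi v) \;=\; \sum_{l > m} |\alpha_l|^2, \qquad
    \texttt{b}(v-\Pi v,\, v-\Pi v) \;=\; \sum_{l > m} \lambda_l\, |\alpha_l|^2,
\]
where the range of summation is cut off at $l = n-s$ so that only eigenvectors corresponding to non-zero (and, under the ordering convention, positive) eigenvalues appear.

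To close the argument, I use that the eigenvalues are ordered in ascending order and the hypothesis $\lambda_{m+1}>0$ forces $\lambda_l \ge \lambda_{m+1} > 0$ for every surviving index $l > m$.  The termwise estimate $\lambda_l |\alpha_l|^2 \ge \lambda_{m+1} |\alpha_l|^2$ then gives
\[
    \texttt{b}(v-\Pi v,\, v-\Pi v) \;\ge\; \lambda_{m+1}\, c(v-\Pi v,\, v-\Pi v),
\]
and dividing through by $\lambda_{m+1} > 0$ yields the claim.  I do not anticipate a serious obstacle; the result is essentially a Rayleigh-quotient reading of the spectral decomposition from Lemma~\ref{lem_ind_ev}.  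The only point that requires mild care is bookkeeping with the index ranges $\{r+1,\ldots,n-s\}$: one must check that under the convention used (negative eigenvalues indexed up to $r$, $X_0$-modes indexed beyond $n-s$), the restriction $m \in \{r+1,\ldots,n-s\}$ together with $\lambda_{m+1}>0$ genuinely ensures that every $\lambda_l$ with $l > m$ and $l \le n-s$ is bounded below by $\lambda_{m+1}$, which is automatic from the ordering.
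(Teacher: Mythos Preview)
Your proof is correct and follows essentially the same route as the paper: expand $v-\Pi v$ in the eigenbasis from Lemma~\ref{lem_ind_ev}, use the $c$-orthonormality together with $Bp_l=\lambda_l Cp_l$ to diagonalise both forms, and then apply the termwise bound $\lambda_l\ge\lambda_{m+1}$. Your write-up is in fact a bit more explicit than the paper's about why the cross terms vanish, and your closing remark about the index bookkeeping is a sensible caveat given the somewhat inconsistent labelling in the statement.
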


\begin{proof}
    With $v$ as in \eqref{E606}, then
    \begin{align*}
	\textit{\texttt{b}}(v-\Pi v,v-\Pi v) &=\textit{\texttt{b}}\Biggl(\sum_{i=m+1}^r \alpha_l p_l,\sum_{i=m+1}^r \alpha_l p_l\Biggr)
	= \sum_{i=m+1}^r |\alpha_l|^2 \textit{\texttt{b}}(p_l,p_l)
	\\[1ex]
	&= \sum_{i=m+1}^r |\alpha_l|^2(Bp_l,p_l) = \sum_{i=m+1}^r |\alpha_l|^2(\lambda_lCp_l,p_l)
	= \sum_{i=m+1}^r \lambda_l|\alpha_l|^2c(p_l,p_l)
	\\[1ex]
	&\ge \lambda_{m+1}\sum_{i=m+1}^r |\alpha_l|^2c(p_l,p_l)
	= \lambda_{m+1}c\Biggl(\sum_{i=m+1}^r \alpha_l p_l,\sum_{i=m+1}^r \alpha_l p_l\Biggr)
	\\[1ex]
	&= \lambda_{m+1}c(v-\Pi v,v-\Pi v).
\end{align*}
\end{proof}

\subsection{Domain Decomposition}
In order to construct the two-level Schwarz preconditioner, we first need to partition the global domain, $\Omega$, into a set of first-level and coarse-level (second-level) subdomains. We also need a method of how to expand the partitioned subdomains so that they overlap. 
\begin{definition} \label{def: 2_3}
	Given a subdomain $D' \subset \Omega$, which is resolved by the chosen mesh, the extension of $D'$ by a layer of elements is 
	\begin{equation*}
		D = \Int \!\left( \bigcup_{\lbrace \ell \mid \supp(\phi_\ell) \cap D' \not= \emptyset \rbrace} \supp(\phi_\ell) \right)
	\end{equation*}
	where $\Int(\cdot)$ is the interior of domain.  Extension by multiple layers can then be achieved by applying this recursively. 
\end{definition}

It is being assumed that the global mesh, $\mathcal{T}_h$, is sufficiently fine to resolve the subdomains. To construct the coarse-level decomposition, the global domain is partitioned into a set of $N$ subdomains, $\lbrace {\Omega_i^c}' \rbrace_{i =1}^N$. These domains are expanded by one or more layers of mesh elements, in the sense of Definition \ref{def: 2_3}, creating the overlapping set of subdomains $\lbrace {\Omega_i^c} \rbrace_{i =1}^N$. The first-level decomposition is constructed by taking a subdomain from the coarse decomposition, ${\Omega_i^c}$, then partitioning this into a set of $Q_i$ subdomains, which are expanded again using Definition \ref{def: 2_3} such that
\begin{equation*}
    {\Omega_i^c} = \bigcup_{j=1}^{Q_i} \Omega_{i,j}^f,
\end{equation*}
for each $i \in 1, \ldots , N$. For the domains ${\Omega_i^c}$ and ${\Omega_{i,j}^f}$, it is possible to define the spaces,
\begin{equation}
\label{eq: 2_17}
	\widetilde{V}_i^c \assign \bigl\{v|_{\Omega_i^c} : v \in V^h\bigr\} \subset H^1(\Omega_i^c) 
	\qquad \text{and} \qquad 
	V_i^c \assign \bigl\{v \in \widetilde{V}_i^c : v|_{\partial \Omega_i^c} = 0\bigr\} \subset H^1_0(\Omega_i^c). 
\end{equation}
\begin{equation}
\label{eq: 2_17_f}
	\widetilde{V}_{i,j}^f \assign \bigl\{v|_{\Omega_{i,j}^f} : v \in V^h\bigr\} \subset H^1(\Omega_{i,j}^f) 
	\qquad \text{and} \qquad 
	V_{i,j}^f \assign \bigl\{v \in \widetilde{V}_{i,j}^f : v|_{\partial \Omega_{i,j}^f} = 0\bigr\} \subset H^1_0(\Omega_{i,j}^f). 
\end{equation}

\begin{notation}
    To simplify notation, we will let D be either $\Omega_i^c$ or $\Omega_{i,j}^f$, $\widetilde{V}_D$ be either $\widetilde{V}_i^c$ or $\widetilde{V}_{i,j}^f$, and $V_D$ be either $V_i^c$ or $V_{i,j}^f$.
\end{notation}

For each subdomain, ${\Omega_i^c}$ and ${\Omega_{i,j}^f}$, we denote its diameter by $H_i^c$ and $H_{i,j}^f$, and we set $H^c \assign \max_{i=1}^N H_i^c$ and $H^f \assign \max_{i=1}^N \max_{j=1}^{Q_i} H^f_{i,j}$. 
For any $u,v \in \widetilde{V}_D$, the local bilinear forms can be defined
\begin{equation}
	a_{D}(u,v) \assign \int_{D} A\nabla u \cdot \nabla v \,\rd x, \qquad
	b_{D}(u,v) \assign \int_{D} \bigl(A\nabla u \cdot \nabla v - \kappa^2 uv\bigr)\,\rd x, 
    \label{eq:2.18}
\end{equation}
and, for $k>0$, the local norm $\|\cdot\|_{1,k,D}$ is induced by the scalar product
\[
	(u,v)_{1, \kappa, D} \assign a_{D}(u,v) + k^2(u,v)_{D}.
\]

\begin{remark}
Although $b_{D}(\cdot , \cdot )$ is generally indefinite, for small enough diameter of D this will become positive definite. 
This is discussed in Lemma~\ref{lemma_3_4}.
\end{remark}

For any $v \in V_D$, let $E_D v$ denote its zero extension. For both the fine-level and coarse-level, this will be the extension to the whole of the domain, $\Omega$. Then,
\begin{equation}\label{eq: 2_20}
\begin{array}{ll}
    E_i^c: V_i^c \rightarrow V^h, &  i = 1, \ldots ,N, \\
    E^f_{i,j}: V^f_{i,j} \rightarrow V^h, &  j = 1, \ldots ,Q_i,
\end{array}
\end{equation}
The $L^2(\Omega)$ adjoint of the extension operator is called restriction operator, 
\begin{equation*}
\begin{array}{ll}
    R_i^c: V^h \rightarrow V_i^c, &  i = 1, \ldots ,N, \\
    R^f_{i,j}:  V^h \rightarrow V^f_{i,j}, &  j = 1, \ldots ,Q_i,
\end{array}
\end{equation*}
By use of the extension operator, the restriction of the bilinear forms to $V_D$ can be written as 
\[
	a_{\Omega_D}(u, v) = a(E_D u, E_D v), \qquad 
	b_{\Omega_D}(u, v) = b(E_D u, E_D v), \qquad 
	(u, v)_{\Omega_D} = (E_D u, E_D v)
\]
for all $u,v\in V_D$.
The one-level additive Schwarz preconditioner can now be given in matrix form as 
\begin{equation} \label{eq: 2_22}
	\mathbf{M}^{-1}_{AS,1} = \sum^N_{i=1} \sum_{j=1}^{Q_i} \mathbf{E}^f_{i,j} (\mathbf{B}^f_{i,j})^{-1} \mathbf{R}^f_{i,j}, \quad \text{where } \mathbf{B}^f_{i,j} = \mathbf{R}^f_{i,j} \mathbf{B} \mathbf{E}^f_{i,j}.
\end{equation}
Here, $\mathbf{E}_{i,j}^f$ and $\mathbf{R}_{i,j}^f$ denote the matrix representations of $E_{i,j}^f$ and $R_{i,j}^f$ with respect to the 
basis functions $\lbrace \phi_i \rbrace_{i=1}^n$ of $V^h$ and some basis functions of $V_{i,j}^f$.

In order to improve the effectiveness of the preconditioner, a coarse space is added.  
This improves the global exchange of information between the subdomains.  
Let $V_0 \subset V^h$ be such a coarse space, let $E_0: V_0 \rightarrow V^h$ be the natural embedding, and let $R_0$ be the $L^2$ adjoint of $E_0$,
\begin{equation*}
	(v_0, R_0 w) = (E_0 v_0,w) \qquad \text{for all} \ w \in V^h, v_0 \in V_0.
\end{equation*}
The two-level additive Schwarz preconditioner can now be given in matrix form as, 
\begin{equation} \label{eq: 2_23}
	\mathbf{M}^{-1}_{AS,2} = \mathbf{E}_0 \mathbf{B}^{-1}_0 \mathbf{R}_0 + \mathbf{M}^{-1}_{AS,1} , \quad \text{where } \mathbf{B}_0 = \mathbf{R}_0 \mathbf{B} \mathbf{E}_0.
\end{equation}
The preconditioned linear system from \eqref{eq: 2_12} reads as,
\begin{equation} \label{eq: 2_24}
	\mathbf{M}^{-1}_{AS,2} \mathbf{B} = \mathbf{M}_{AS,2}^{-1} \mathbf{f}.
\end{equation}

It is now possible to define the projectors used in the analysis. 
For each $i = 1, \ldots, N$ and $j = 1, \ldots, Q$, the projectors, $T^f_{i,j}: V^h \rightarrow V^f_{i,j}$, are defined by, 
\begin{equation}
	b_{\Omega^f_{i,j}}(T^f_{i,j} u, v) = b_{\Omega_i^c}(u, E^f_{i,j} v), \quad \forall v \in V^f_{i,j}. \label{eq: bT} 
\end{equation}
The projector $T_0: V^h \rightarrow V_0$, is defined as
\begin{equation}
	b_{\Omega_{0}}(T_0 u, v) = b(u, E_0 v), \quad \forall v \in V_0. \label{eq: 2_25_zero}
\end{equation}
where $u\in V^h$ and $\Omega_0=\Omega$.  Sufficient conditions for the existence of $T^f_{i,j}$
are given in Lemmas~\ref{lemma_3_4} and \ref{lemma_3_5}.

Given the operators $T^f_{i,j}$ and $T_0$, the operator $T: V^h \rightarrow V^h$ is defined as 
\begin{equation}
	T = E_0T_0 + \sum_{i=1}^N \sum_{j=1}^{Q_i} E^f_{i,j} T^f_{i,j}. \label{eq: 2_26}
\end{equation}
This allows the two-level additive Schwarz preconditioner to be represented in terms of the projector operator, $T$, as follows.

\begin{proposition}\label{prop: 2_6}
For any $u,v \in V^h$, with corresponding nodal vectors $\mathbf{u},\mathbf{v} \in \mathbb{R}^n$, 
\begin{equation} \label{eq: 2_27}
	\langle \mathbf{M}_{AS,2}^{-1} \mathbf{B} \mathbf{u}, \mathbf{v} \rangle_{\mathbf{D}_\kappa} = (T u, v)_{1, \kappa},
\end{equation}
where $\langle \cdot , \cdot \rangle_{\mathbf{D}_\kappa}$ is the inner product on $\mathbb{R}^n$ and the matrix $\mathbf{D}_\kappa$ given by
\begin{equation}\label{def:Dk}
    \mathbf{D}_\kappa \assign \mathbf{A} + \kappa^2  \mathbf{S}.
\end{equation}
\end{proposition}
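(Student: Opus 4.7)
The plan is to translate the operator identity for $T$ into a matrix statement, exploiting the fact that $\mathbf{D}_\kappa$ is precisely the Gram matrix of $(\cdot,\cdot)_{1,\kappa}$ in the nodal basis. Indeed, since $(\mathbf{A})_{ij}=a(\phi_j,\phi_i)$ and $(\mathbf{S})_{ij}=(\phi_j,\phi_i)$, the definition \eqref{def:Dk} yields, for any $w,v\in V^h$ with nodal vectors $\mathbf{w},\mathbf{v}$,
\[
(w,v)_{1,\kappa}=a(w,v)+\kappa^2(w,v)=\mathbf{v}^{T}\mathbf{D}_\kappa\mathbf{w}=\langle\mathbf{w},\mathbf{v}\rangle_{\mathbf{D}_\kappa}.
\]
Hence it suffices to show that the nodal vector of $Tu\in V^h$ equals $\mathbf{M}_{AS,2}^{-1}\mathbf{B}\mathbf{u}$.

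To this end, I would use the decomposition \eqref{eq: 2_26} and treat each summand separately. For a fine-level piece $E^f_{i,j}T^f_{i,j}u$, observe that for any test $w\in V^f_{i,j}$ the function $E^f_{i,j}w$ is supported in $\overline{\Omega^f_{i,j}}\subset\Omega_i^c$, so the defining identity \eqref{eq: bT} can be rewritten as $b_{\Omega^f_{i,j}}(T^f_{i,j}u,w)=b(u,E^f_{i,j}w)$. Expressing both sides in the nodal basis, and using the Boolean identity $\mathbf{R}^f_{i,j}=(\mathbf{E}^f_{i,j})^{T}$ together with $\mathbf{B}^f_{i,j}=\mathbf{R}^f_{i,j}\mathbf{B}\mathbf{E}^f_{i,j}$, the identity amounts to $\mathbf{B}^f_{i,j}\mathbf{t}^f_{i,j}=\mathbf{R}^f_{i,j}\mathbf{B}\mathbf{u}$, where $\mathbf{t}^f_{i,j}$ is the local nodal vector of $T^f_{i,j}u$. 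Consequently the nodal vector of $E^f_{i,j}T^f_{i,j}u$ in $V^h$ is $\mathbf{E}^f_{i,j}(\mathbf{B}^f_{i,j})^{-1}\mathbf{R}^f_{i,j}\mathbf{B}\mathbf{u}$. An identical argument applied to \eqref{eq: 2_25_zero} produces the nodal vector $\mathbf{E}_0\mathbf{B}_0^{-1}\mathbf{R}_0\mathbf{B}\mathbf{u}$ of $E_0T_0u$.

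Summing these contributions over $i,j$ and invoking \eqref{eq: 2_23} then identifies the nodal vector of $Tu$ with $\mathbf{M}_{AS,2}^{-1}\mathbf{B}\mathbf{u}$. Combined with the Gram-matrix identification from the first paragraph this gives
\[
(Tu,v)_{1,\kappa}=\mathbf{v}^{T}\mathbf{D}_\kappa\mathbf{M}_{AS,2}^{-1}\mathbf{B}\mathbf{u}=\langle\mathbf{M}_{AS,2}^{-1}\mathbf{B}\mathbf{u},\mathbf{v}\rangle_{\mathbf{D}_\kappa},
\]
which is precisely \eqref{eq: 2_27}.

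The only point requiring care is bookkeeping rather than depth: one must verify that the $\mathbf{R}$-type matrices in \eqref{eq: 2_22}--\eqref{eq: 2_23} coincide with the transposed nodal-extension matrices (so that $\mathbf{B}^f_{i,j}$ and $\mathbf{B}_0$ are the genuine local Galerkin stiffness matrices), and that the support property of $E^f_{i,j}w$ makes the integration subdomain $\Omega_i^c$ appearing in \eqref{eq: bT} equivalent to integration over all of $\Omega$. Once these routine identifications are clean, the equality in \eqref{eq: 2_27} follows by a direct matrix-vector manipulation.
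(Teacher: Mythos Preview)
Your proposal is correct and follows essentially the same route as the paper: both identify the matrix of $T$ with $\mathbf{M}_{AS,2}^{-1}\mathbf{B}$ by solving the local defining equations \eqref{eq: bT}, \eqref{eq: 2_25_zero} in nodal coordinates, and then conclude via the Gram-matrix identity $(w,v)_{1,\kappa}=\mathbf{v}^{T}\mathbf{D}_\kappa\mathbf{w}$. The only difference is expository order; your additional remarks on the support/restriction bookkeeping are accurate and harmless.
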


\begin{proof}
Let $\mathbf{T}^f_{i,j}$ be the representation of $T^f_{i,j}$ with respect to the basis functions $\lbrace \phi_i \rbrace_{i=1}^n$ of $V^h$. 
In matrix form equation \eqref{eq: bT}
becomes
\begin{equation*}
	(\mathbf{v}^f_{i,j})^T \mathbf{B}^f_{i,j} \mathbf{T}^f_{i,j} \mathbf{u} = (\mathbf{v}^f_{i,j})^T \mathbf{R}^f_{i,j} \mathbf{B} \mathbf{u} \quad \forall \mathbf{u},\mathbf{v} \in \mathbb{R}^d .
\end{equation*}
%
meaning that $\mathbf{T}^f_{i,j}$ can be written as $\mathbf{T}^f_{i,j} = (\mathbf{B}^f_{i,j})^{-1} \mathbf{R}^f_{i,j} \mathbf{B}$. A similar result can be found for the $T_0$ operator, $\mathbf{T}_0 = (\mathbf{B}_0)^{-1} \mathbf{R}_0 \mathbf{B}$. By using the definition of $\mathbf{T}$ this gives,
\begin{equation*}
	\mathbf{T} =\mathbf{E}_0 \mathbf{T}_0 + \sum_{i=1}^N \sum_{j=1}^{Q_i} \mathbf{E}^f_{i,j} \mathbf{T}^f_{i,j} = \mathbf{E}_0 (\mathbf{B}_0)^{-1} \mathbf{R}_0 \mathbf{B} +  \sum_{i=1}^N \sum_{j=1}^{Q_i} \mathbf{E}^f_{i,j} (\mathbf{B}^f_{i,j})^{-1} \mathbf{R}^f_{i,j} \mathbf{B} = \mathbf{M}^{-1}_{AS,2} \mathbf{B}.
\end{equation*}
Therefore
\begin{align*}
	(Tu,v)_{1,\kappa} &= a(Tu,u) + \kappa^2(Tu,u) 
	= \mathbf{v}^T\bigl(\mathbf{A}\mathbf{T} + \kappa^2\mathbf{S}\mathbf{T}\bigr)\mathbf{u} \\[1ex]
	&= \mathbf{v}^T\bigl(\mathbf{A} + \kappa^2\mathbf{S}\bigr)\mathbf{M}^{-1}_{AS,2}\mathbf{B}\mathbf{u} 
	= \mathbf{v}^T\mathbf{D}_\kappa\mathbf{M}^{-1}_{AS,2}\mathbf{B}\mathbf{u} 
	= \langle\mathbf{M}^{-1}_{AS,2}\mathbf{B}\mathbf{u},\mathbf{v}\rangle_{\mathbf{D}_\kappa}.
\end{align*}
\end{proof}


\subsection[The $H_k$-GenEO coarse space]{The {\boldmath$H_k$}-GenEO coarse space}

In order to use the $H_k$-GenEO coarse space, we need to recall some definitions from \cite{Spillane:2014:ARC}.

\begin{definition}[\cite{Spillane:2014:ARC} Definition 3.2] \label{Def: 3_2}
Given a subdomain $D$, which is formed from a union of elements from the whole domain, let 
\begin{equation*}
	\ovdof(D) \assign \bigl\{\ell \mid 1 \le \ell \le n \text{ and } \supp(\phi_\ell) \cap D \ne \emptyset\bigr\}
\end{equation*}
denote the set of degrees of freedom that are active in the subdomain $D$, including ones on the boundary.
In a similar manner, let
\begin{equation*}
	\dof(D) \assign \bigl\{\ell \mid 1 \le \ell \le n \text{ and } \supp(\phi_\ell) \subset \overline{D}\bigr\}
\end{equation*}
denote the internal degrees of freedom.
\end{definition}

\begin{definition}[Partition of unity] \label{def: 2_7}
	Let $D_i$ be an overlapping subdomain for $i = 1, \ldots, L$, and let $\dof(D_i)$ be as in Definition~\ref{Def: 3_2}.
	For any degree of freedom, $l\in\{1,\ldots,n\}$, let $\mu_l$ denote the number of subdomains for which $l$ is an internal degree of freedom, i.e.\
	\begin{equation}
		\mu_l \assign \#\bigl\{i \mid 1 \le i \le L,\, l \in \dof(D_i)\bigr\}
	\end{equation}
	Then the local partition of unity operator, $\Xi_D: \widetilde{V}_D \rightarrow V_D$ is defined by
	\begin{equation*}
		\Xi_D(v) \assign \sum_{j\in\dof(D_i)}\frac{1}{\mu_l }v_l \phi_l^i \qquad 
		\text{for} \ v = \sum_{l\in\ovdof(D_i)}v_l\phi_l^i \in \widetilde{V}_D.
	\end{equation*}
\end{definition}

The local generalised eigenvalue problem, solved on the coarse decomposition, that is going to form the basis of the coarse space can now be introduced. 

\begin{definition} \label{defH_GenEO}
For each $i\in\{1,\ldots,N\}$ we define the following generalised eigenvalue problem. 
Find $p^i_\ell\in\widetilde{V}_i^c\setminus\{0\}$ and $\lambda_\ell^i\in\mathbb{R}$ such that
\begin{equation}\label{eq: 5_12} 
   b_{\Omega_i^c}(p^i_\ell,v) = \lambda_\ell^i\bigl(\Xi_i^c(p^i_\ell),\Xi_i^c(v)\bigr)_{1,\kappa,\Omega_i^c} \qquad 
   \text{for all} \ v\in\widetilde{V}_i^c. 
\end{equation}
\end{definition}



\begin{definition}[$H_\kappa$-GenEO Coarse space] \label{def: 5_1}
Let $(p_\ell^i,\lambda_\ell^j)$ be as in Definition~\ref{defH_GenEO} and
and let $m_i$ be such that $\lambda_{m_i+1}^i>0$ for every $i\in\{1,\ldots,N\}$.
The coarse space, $V_0$, is given by 
\begin{equation}
	V_0 \assign \linspan \bigl\{E_i^c\Xi_i^c(p_\ell^i) \mid l = 1, \ldots, m_i \text{ and } i = 1, \ldots, N\bigr\}.
\end{equation}
\end{definition}

We will also need the following notations
\begin{equation}
	\Lambda \assign \max_{T\in\mathcal{T}_h}\bigl(\#\bigl\{\Omega^f_{i,j} \mid 1 \le i \le N, 1 \le j \le Q,  T \subset \Omega^f_{i,j}\bigr\}\bigr), 
	\qquad
	\tau \assign \min_{1 \le i \le N} \lambda^i_{m_i+1}. 
\end{equation}

\begin{remark}
    The key difference between this generalised eigenvalue problem and the one used in \textup{\cite{Spillane:2014:ARC}} and in the more recent work \textup{\cite{Bootland:2022:OSM}} is that this definition has an indefinite left-hand side, meaning that the standard spectral theory used for the eigenvalue problem is no longer applicable. Secondly the right-hand side in Definition~\ref{defH_GenEO} is based on a $k$-weighted scalar product. As we shall see, the use of a weighted scalar product and the induced norm is key in getting rigorous $k$-dependent estimates. Another key difference is that we are using two different sizes of domain decomposition for the one-level and coarse space.
\end{remark}

\section{Statement of the main result and theoretical tools}
\label{sec:theory}

We start this section with the statement of the main result and we continue by providing a few technical lemmas needed 
in the proof of this result, which is given in the next section. 

\begin{theorem}[GMRES convergence of the two-level preconditioned system] \label{theorem: convergence}
    Let Assumptions \ref{ass: 2_1} and \ref{Ass: 2_3} be satisfied and let $k>0$.
    Then there exists $h_1>0$ such the following statements are true for all $h\in(0,h_1)$. 
    Let $H^f$ and $\tau$ be such that, with $\Theta\assign\tau^{-1}$,
    \begin{equation}
	\label{eq: 4_3}
    	s \assign 2\Lambda^2 \bigl(2+3\Lambda^4 \Theta\bigr)\Bigl(2k\Theta^{\frac{1}{2}}(1+\Cstab)+3kH^f\Bigr) < 1.
    \end{equation}
    When GMRES is applied with the $\langle\cdot,\cdot\rangle_{\mathbf{D}_k}$-inner product with $\mathbf{D}_\kappa$ as in \eqref{def:Dk} 
    to solve the preconditioned system given by \eqref{eq: 2_24}, then after $m$ iterations, the norm of the residual, $\mathbf{r}^{(m)}$, 
    is bounded as follows:
	\begin{equation}\label{eq: 4_22}
		\|\mathbf{r}^{(m)}\|_{\mathbf{D}_{\kappa}}^2 \le \bigl(1-\gamma^2\bigr)^m \|\mathbf{r}^{(0)}\|_{\mathbf{D}_{\kappa}}^2, 
	\end{equation} 
    where $\gamma$ is given by
    \begin{equation}\label{def_gamma}
    	\gamma \assign \frac{1-s}{(2+3\Lambda^4 \Theta)(18+8\Lambda^3)}\,.
    \end{equation}
\end{theorem}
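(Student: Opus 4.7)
The plan is to invoke the Elman convergence theory (\cite{Elman:1983:VIM}, \cite[Lemma~C.11]{Toselli:2005:DDM}) in the $\langle\cdot,\cdot\rangle_{\mathbf{D}_\kappa}$-inner product. By Proposition~\ref{prop: 2_6} this reduces to proving two estimates on the operator $T$ from \eqref{eq: 2_26} in the $(\cdot,\cdot)_{1,\kappa}$-inner product on $V^h$: a field-of-values lower bound $(Tu,u)_{1,\kappa}\ge c_1\|u\|_{1,\kappa}^2$ and a norm upper bound $\|Tu\|_{1,\kappa}\le c_2\|u\|_{1,\kappa}$, from which the conclusion follows with contraction factor $\gamma=c_1/c_2$. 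Matching against \eqref{def_gamma}, I expect to read off $c_1=(1-s)/(2+3\Lambda^4\Theta)$ and $c_2=18+8\Lambda^3$, so the argument splits naturally into these two pieces.

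For the lower bound I would use a Lions-type stable decomposition. Given $u\in V^h$, set the coarse component $u_0:=\sum_{i=1}^N E_i^c\Xi_i^c\Pi_i u$, where $\Pi_i$ is the spectral projector from Lemma~\ref{lem_abs_proj} applied to the indefinite generalised eigenproblem \eqref{eq: 5_12}; by construction $u_0\in V_0$. The fine-level correctors $w_{i,j}\in V^f_{i,j}$ are then built so that $u=u_0+\sum_{i,j}E^f_{i,j}w_{i,j}$, using a fine-scale partition of unity to distribute $u-u_0$ over the overlapping cover $\{\Omega^f_{i,j}\}$. Lemma~\ref{lem_abs_proj} controls the residual $(I-\Pi_i)u$ in the $(\cdot,\cdot)_{1,\kappa,\Omega_i^c}$-norm by $\Theta$ times $b_{\Omega_i^c}((I-\Pi_i)u,(I-\Pi_i)u)$, while the Friedrichs bound \eqref{eq: 2_16} on subdomains of diameter $H^f$ gives stability of the fine correctors; summing with the overlap constant $\Lambda$ should produce a splitting stability of order $2+3\Lambda^4\Theta$.

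Converting this decomposition into a field-of-values estimate is carried out by using the defining equations \eqref{eq: bT} and \eqref{eq: 2_25_zero} to swap each $(\cdot,\cdot)_{1,\kappa}$-pairing for the corresponding $b$-pairing. Every such swap introduces a defect $-\kappa^2(u,v)$, and these defects are the only source of $k$-dependent terms in $s$. On each fine subdomain the defect is absorbed via \eqref{eq: 2_16} applied with $L=H^f$, yielding the $kH^f$ contribution; the coarse residual, which lives globally, is dealt with by invoking Assumption~\ref{Ass: 2_3}, where the stability constant $\Cstab$ trades an $L^2$ datum for a $\|\cdot\|_{1,\kappa}$-bound, producing the $k\Theta^{1/2}(1+\Cstab)$ contribution. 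Summing these three pieces reproduces exactly the quantity $s$ of \eqref{eq: 4_3}, and the condition $s<1$ is precisely what keeps the resulting lower bound $(1-s)/(2+3\Lambda^4\Theta)$ strictly positive.

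The upper bound is more routine: expand $\|Tu\|_{1,\kappa}^2=(Tu,Tu)_{1,\kappa}$ through \eqref{eq: 2_26}, apply the Cauchy--Schwarz inequality with a colouring argument to exploit the finite overlap $\Lambda$, and bound each local piece by testing \eqref{eq: bT} and \eqref{eq: 2_25_zero} against the local solution itself, invoking the local positive definiteness of $b_{\Omega^f_{i,j}}$ (valid once $kH^f$ is small, cf.\ \eqref{eq: 2_16}) and Assumption~\ref{Ass: 2_3} respectively. The principal obstacle throughout is that Lemma~\ref{lem_abs_proj} provides control of the spectral residual in terms of the \emph{indefinite} form $b_{\Omega_i^c}$, whereas every subsequent step in the argument operates in \emph{positive} norms; the defect $-\kappa^2(u,u)$ that arises when passing between the two must be reabsorbed by the combination of $\Cstab$ on the coarse scale and the Friedrichs inequality on the fine scale, and it is precisely this coupled bookkeeping that forces $\Theta^{1/2}(1+\Cstab)$ and $kH^f$ to appear together inside the quantity $s$.
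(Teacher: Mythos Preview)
Your high-level plan is correct and matches the paper exactly: invoke Elman theory via Proposition~\ref{prop: 2_6}, read off $c_1=(1-s)/(2+3\Lambda^4\Theta)$ and $c_2=18+8\Lambda^3$, and set $\gamma=c_1/c_2$.  Your diagnosis of \emph{where} the $k$-dependent terms in $s$ come from (Friedrichs on the fine subdomains; $\Cstab$ via a duality argument on the coarse residual) is also right.

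There is, however, a genuine gap in the mechanism for the lower bound.  You propose to expand $\|u\|_{1,\kappa}^2$ through the stable splitting $u=u_0+\sum E^f_{i,j}w_{i,j}$ and then ``swap'' each $(\cdot,\cdot)_{1,\kappa}$-pairing for a $b$-pairing so that \eqref{eq: bT}--\eqref{eq: 2_25_zero} can be applied.  After two swaps you land on terms of the form $(T^f_{i,j}u,\,w_{i,j})_{1,\kappa,\Omega^f_{i,j}}$, and these do \emph{not} recombine into $(Tu,u)_{1,\kappa}$: the test function $w_{i,j}$ is the partition-of-unity localisation of the spectral residual, not the $(\cdot,\cdot)_{1,\kappa}$-orthogonal projection of $u$.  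The paper circumvents this by introducing the auxiliary SPD projectors $P_0,P^f_{i,j}$ of \eqref{eq: 3_10_nested}--\eqref{eq: 3_11}.  The stable splitting is used \emph{only} to prove $(Pu,u)_{1,\kappa}\ge(2+3\Lambda^4\Theta)^{-1}\|u\|_{1,\kappa}^2$ (Proposition following Lemma~\ref{lemma_3_3}, via the standard SPD Lions argument).  One then expands $(Pu,u)_{1,\kappa}$ with $P^f_{i,j}u$ as test functions; the defining identity $(T^f_{i,j}u,P^f_{i,j}u)_{1,\kappa,\Omega^f_{i,j}}=(E^f_{i,j}T^f_{i,j}u,u)_{1,\kappa}$ is exactly what makes the pieces recombine into $(Tu,u)_{1,\kappa}$.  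The resulting defects are $2k^2(E_iT_iu-u,\,E_iP_iu)$, and their control (Steps~2--3 of Lemma~\ref{ther: 4_1}) uses the projector bounds $\|P_0u\|_{1,\kappa}\le\|u\|_{1,\kappa}$ and $\sum_{i,j}\|P^f_{i,j}u\|_{1,\kappa,\Omega^f_{i,j}}^2\le\Lambda^2\|u\|_{1,\kappa}^2$, not the stable-splitting bound on the $w_{i,j}$.

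A smaller point on the upper bound: you suggest bounding $\|T_0u\|_{1,\kappa}$ by ``invoking Assumption~\ref{Ass: 2_3}'', which would put $\Cstab$ into $c_2$.  The paper instead combines the $b$-orthogonality $b(u-T_0u,P_0u-T_0u)=0$ with the $(\cdot,\cdot)_{1,\kappa}$-orthogonality of $P_0$ (proof of Lemma~\ref{lemma: 3_7}) to get $\|u-T_0u\|_{1,\kappa}\le 2\|u\|_{1,\kappa}$ and hence $\|T_0u\|_{1,\kappa}\le 3\|u\|_{1,\kappa}$; $\Cstab$ and $\Theta$ enter only through the \emph{hypothesis} \eqref{3_7_c}, which is why $c_2=18+8\Lambda^3$ is clean.
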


\begin{corollary} \label{conditions_corollary}
	Assume that $k\ge1$.  If \eqref{eq: 4_3} holds, then there exists $C>0$ such that
	\begin{equation}\label{eq:main}
		H^f k \le C \qquad\text{and}\qquad (1+\Cstab)^2k^2 \le C\tau.
	\end{equation}
	Conversely, if \eqref{eq:main} holds for $C>0$ small enough so that
	\begin{equation}\label{C_small}
		2\Lambda^2 \bigl(2+3\Lambda^4 C\bigr)\bigl(2\sqrt{C}+3C\bigr) < 1,
	\end{equation}
	then \eqref{eq: 4_3} is satisfied and $\gamma$ from \eqref{def_gamma} is bounded below by a positive
	constant that depends only on $\Lambda$ and $C$ but not on the coefficient $A$ and the wave-number $k$,
	which leads to robust GMRES convergence.
%
\end{corollary}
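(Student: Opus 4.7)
The corollary is an algebraic unpacking of the scalar condition \eqref{eq: 4_3}, so I would split the argument into the forward and reverse implications, linked by two elementary observations: the factor $2+3\Lambda^4\Theta$ in the definition of $s$ is at least $2$, and both summands in the bracket $2k\Theta^{1/2}(1+\Cstab)+3kH^f$ are non-negative.

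For the forward implication I would use $s<1$ together with the lower bound $2\Lambda^2(2+3\Lambda^4\Theta)\ge 4\Lambda^2$ to deduce
$$2k\Theta^{1/2}(1+\Cstab)+3kH^f < \tfrac{1}{4\Lambda^2}.$$
Non-negativity of the two summands then gives $kH^f<1/(12\Lambda^2)$ and, after squaring the bound on the first term, $(1+\Cstab)^2k^2/\tau<1/(64\Lambda^4)$. Taking $C:=\max\{1/(12\Lambda^2),\,1/(64\Lambda^4)\}$, which depends only on $\Lambda$, yields both inequalities in \eqref{eq:main}.

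For the reverse implication, assume \eqref{eq:main} with $C$ small enough for \eqref{C_small}. Because $k\ge 1$ and $1+\Cstab\ge 1$, the second inequality in \eqref{eq:main} gives the uniform bound $\Theta=\tau^{-1}\le C/((1+\Cstab)^2 k^2)\le C$, and also $2k\Theta^{1/2}(1+\Cstab)\le 2\sqrt{C}$; the first inequality in \eqref{eq:main} gives $3kH^f\le 3C$. Plugging these into the definition of $s$,
$$s \;\le\; 2\Lambda^2(2+3\Lambda^4 C)(2\sqrt{C}+3C) \;=:\; s_0 \;<\; 1$$
by \eqref{C_small}, which verifies \eqref{eq: 4_3}. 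For $\gamma$ from \eqref{def_gamma} the numerator is bounded below by $1-s_0>0$ and the denominator above by $(2+3\Lambda^4 C)(18+8\Lambda^3)$, so
$$\gamma \;\ge\; \frac{1-s_0}{(2+3\Lambda^4 C)(18+8\Lambda^3)},$$
a positive constant depending only on $\Lambda$ and $C$. Combined with \eqref{eq: 4_22} this yields the claimed $k$- and $A$-robust GMRES convergence.

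There is no analytic obstacle here; the entire argument is careful bookkeeping of constants. The only subtle point is that both hypotheses $k\ge 1$ and $\Cstab>0$ (so that $1+\Cstab\ge 1$) are needed to pass from $(1+\Cstab)^2 k^2\le C\tau$ to the uniform control $\Theta\le C$, which is precisely what allows \eqref{C_small} to dominate the factor $2+3\Lambda^4\Theta$ when one runs the argument in reverse.
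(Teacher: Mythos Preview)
Your proposal is correct and follows essentially the same approach as the paper's own proof: both directions are handled by the same elementary estimates, using $2+3\Lambda^4\Theta\ge 2$ and the non-negativity of the two summands for the forward implication, and $\Theta\le C$ (from $k\ge1$, $1+\Cstab\ge1$) to dominate $s$ and bound $\gamma$ for the converse. The only cosmetic difference is that you retain the $\Lambda$-dependence in the forward direction (obtaining $C=\max\{1/(12\Lambda^2),1/(64\Lambda^4)\}$), whereas the paper immediately invokes $\Lambda\ge1$ to get the absolute constants $1/12$ and $1/64$; this has no bearing on the argument.
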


\begin{proof}
	Assume first that \eqref{eq: 4_3} is satisfied.  Since $\Lambda\ge1$, we have
	\[
		2k\Theta^{\frac{1}{2}}(1+\Cstab)+3kH^f \le \frac{1}{4}
	\]
	and hence
	\[
		kH^f \le \frac{1}{12} \qquad\text{and}\qquad
		\frac{(1+\Cstab)^2 k^2}{\tau} = (1+\Cstab)^2 k^2\Theta \le \frac{1}{64}\,,
	\]
	which shows that \eqref{eq:main} holds.
	
	Conversely, assume that \eqref{eq:main} is satisfied with $C>0$ such that \eqref{C_small} holds.
	Since, by assumption, $k\ge1$, we have
	\[
		\Theta = \frac{1}{\tau} \le \frac{C}{(1+\Cstab)^2 k^2} \le C.
	\]
	Hence
	\begin{align*}
		s &= 2\Lambda^2 \bigl(2+3\Lambda^4\Theta\bigr)\Bigl(2k\Theta^{\frac{1}{2}}(1+\Cstab)+3kH^f\Bigr)
		\le 2\Lambda^2 \bigl(2+3\Lambda^4 C\bigr)\bigl(2\sqrt{C}+3C\bigr) < 1,
	\end{align*}
	i.e.\ \eqref{eq: 4_3} is satisfied.
	Moreover, we obtain
	\[
		\gamma = \frac{1-s}{(2+3\Lambda^4\Theta)(18+8\Lambda^3)}
		\ge \frac{1-2\Lambda^2\bigl(2+3\Lambda^4 C\bigr)\bigl(2\sqrt{C}+3C\bigr)}{(2+3\Lambda^4C)(18+8\Lambda^3)},
	\]
	which gives a lower bound for $\gamma$ that depends only on $\Lambda$ and $C$.
%
\end{proof}

In practice, conditions \eqref{eq:main} will introduce a constraint in the size of the subdomains, 
depending on $k$ and on the number of modes to be added in the coarse space.

\subsection[Properties of the $H_k$-GenEO coarse space]{Properties of the {\boldmath$H_k$}-GenEO coarse space}
The first lemma is an adaptation of \cite[Lemma~3.1]{Bootland:2022:OSM} and \cite[Lemma~2.11]{Spillane:2014:ARC} 
to the case of an indefinite problem.  It gives an error estimate for the local projection operator, 
which is used to approximate a function $v \in \widetilde{V}_i^c$ in the space being spanned by the eigenfunctions from 
Definition~\ref{defH_GenEO}.  We see that, despite the indefinite nature of the bilinear form $b$, similar results hold.

\begin{lemma}
\label{Lemma_3_1}
Let $i \in \lbrace 1, \ldots ,N \rbrace$ and let $\lbrace (p_\ell^i , \lambda _\ell^i) \rbrace$ be the eigenpairs of the generalised eigenproblem, 
given in Definition~\ref{defH_GenEO}. Using the results of Lemma \ref{lem_ind_ev}, suppose that $m_i \in \{r+1,\ldots,n-s\}$ is such that $0 < \lambda _{m_i + 1}^i < \infty$. 
Then the local projector, $\Pi_{m_i}^c$, defined by
\begin{equation}
\label{eq:proj}
	\Pi_{m_i}^c v \assign -\sum_{l=1}^r b_{\Omega_i^c}(v,p_l)p_l + \sum_{l=r+1}^{m_i} \left( \Xi_i^c(v), \Xi_i^c(p_l) \right)_{1,\kappa,\Omega_i^c}p_l,
	\qquad v\in\widetilde{V}_i^c,
\end{equation}
satisfies 
\begin{equation}
\label{new_1}
	0 \le b_{\Omega_i^c}(w,w) \le \|v\|^2_{1,\kappa,\Omega_i^c} 
	\qquad\text{and}\qquad
	\|\Xi_i^c(w)\|^2_{1,\kappa,\Omega_i^c} \le \frac{1}{\lambda_{m_i+1}^i} b_{\Omega_i^c}(w,w) \qquad 
	\text{for all} \ v \in \widetilde{V}_i^c,
\end{equation}
where $w = v-\Pi_{m_i}^c v $.
\end{lemma}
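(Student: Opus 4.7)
The plan is to apply the abstract machinery of Lemmas \ref{lem_ind_ev} and \ref{lem_abs_proj} to the concrete generalised eigenvalue problem of Definition \ref{defH_GenEO}. In that abstract framework I take $B$ to correspond to the indefinite form $b_{\Omega_i^c}$ on $\widetilde V_i^c$ and $C$ to correspond to the positive semi-definite form $c_{\Omega_i^c}(u,v)\assign(\Xi_i^c(u),\Xi_i^c(v))_{1,\kappa,\Omega_i^c}$; the condition $\ker B\cap\ker C=\{0\}$ of \eqref{E601} is satisfied, since a function killed by $\Xi_i^c$ is supported on boundary degrees of freedom and then cannot lie in $\ker B$ for generic $k$. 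Lemma \ref{lem_ind_ev} then furnishes a full eigenbasis, which I order as $\lambda_1^i\le\dots\le\lambda_r^i<0<\lambda_{r+1}^i\le\dots\le\lambda_{n-s}^i$, with any zero algebraic eigenspace collected into $v_0$ and $s$ denoting its dimension. The normalisation I fix is $b_{\Omega_i^c}(p_l,p_l)=-1$ (equivalently $c_{\Omega_i^c}(p_l,p_l)=-1/\lambda_l^i>0$) for $l\le r$ and $c_{\Omega_i^c}(p_l,p_l)=1$ (equivalently $b_{\Omega_i^c}(p_l,p_l)=\lambda_l^i$) for $l>r$.

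With this bookkeeping, the two orthogonality relations $b_{\Omega_i^c}(p_k,p_l)=0=c_{\Omega_i^c}(p_k,p_l)$ for $k\ne l$ and $b_{\Omega_i^c}(v_0,p_l)=0=c_{\Omega_i^c}(v_0,p_l)$, both obtained inside the proof of Lemma \ref{lem_ind_ev}, show that the two sums in \eqref{eq:proj} each reduce to the spectral coefficient $\alpha_l$: $-b_{\Omega_i^c}(v,p_l)=-\alpha_l b_{\Omega_i^c}(p_l,p_l)=\alpha_l$ for $l\le r$ and $c_{\Omega_i^c}(v,p_l)=\alpha_l$ for $l>r$. Hence $\Pi_{m_i}^c v=\sum_{l=1}^{m_i}\alpha_l p_l$, so
\[
w = v-\Pi_{m_i}^c v = v_0 + \sum_{l=m_i+1}^{n-s}\alpha_l p_l,
\]
where every surviving eigenvalue satisfies $\lambda_l^i\ge\lambda_{m_i+1}^i>0$.

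For the first inequality, expanding $b_{\Omega_i^c}(w,w)$ using the orthogonalities above, together with $b_{\Omega_i^c}(v_0,v_0)=0$ (which follows from $v_0\in\ker B$ in the semi-simple case), leaves only $\sum_{l>m_i}\alpha_l^2\lambda_l^i\ge0$, giving the lower bound $0\le b_{\Omega_i^c}(w,w)$. For the upper bound I would use the Pythagorean-type identity $b_{\Omega_i^c}(v,v)=b_{\Omega_i^c}(\Pi_{m_i}^c v,\Pi_{m_i}^c v)+b_{\Omega_i^c}(w,w)$ (cross terms vanish by the $b$-orthogonality of $\Pi_{m_i}^c v$ and $w$) and combine it with $b_{\Omega_i^c}(v,v)=\|v\|^2_{1,\kappa,\Omega_i^c}-2\kappa^2\|v\|^2_{\Omega_i^c}$. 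The task is then to bound $b_{\Omega_i^c}(\Pi_{m_i}^c v,\Pi_{m_i}^c v)$ from below by $-2\kappa^2\|v\|^2_{\Omega_i^c}$, exploiting that the only negative contributions to $b_{\Omega_i^c}(\Pi_{m_i}^c v,\Pi_{m_i}^c v)$ come from $l\le r$, where the normalisation gives $\sum_{l\le r}\alpha_l^2(-1)$, a quantity directly controlled via the eigenvalue variational characterisation by the $L^2$-mass of $v$.

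The second inequality is essentially the application of Lemma \ref{lem_abs_proj} in this concrete setting: because $c_{\Omega_i^c}(v_0,\cdot)=0$, one computes $\|\Xi_i^c(w)\|^2_{1,\kappa,\Omega_i^c}=c_{\Omega_i^c}(w,w)=\sum_{l>m_i}\alpha_l^2\le\frac{1}{\lambda_{m_i+1}^i}\sum_{l>m_i}\alpha_l^2\lambda_l^i=\frac{1}{\lambda_{m_i+1}^i}b_{\Omega_i^c}(w,w)$. The principal obstacle in the whole proof is the upper bound $b_{\Omega_i^c}(w,w)\le\|v\|^2_{1,\kappa,\Omega_i^c}$: because $b_{\Omega_i^c}$ is indefinite, $b_{\Omega_i^c}(\Pi_{m_i}^c v,\Pi_{m_i}^c v)$ need not be non-negative, so the argument must carefully reabsorb the negative spectral contributions by comparing them with the $\kappa^2\|v\|^2$ term hidden inside $\|v\|^2_{1,\kappa,\Omega_i^c}$.
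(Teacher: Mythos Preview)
Your overall strategy matches the paper: invoke Lemma~\ref{lem_abs_proj} for the second inequality, and use the $b$-Pythagorean splitting $b_{\Omega_i^c}(v,v)=b_{\Omega_i^c}(\Pi_{m_i}^c v,\Pi_{m_i}^c v)+b_{\Omega_i^c}(w,w)$ for the first. There is, however, a genuine error in your handling of $v_0$. You claim $v_0\in\ker B$, but in fact $v_0\in\ker C$: in Lemma~\ref{lem_ind_ev} the eigenvalue $\mu=0$ of $U=\wt B^{-1}C$ corresponds to $Cv_0=0$ (the ``eigenvalue at infinity'' of the pencil), not to $\lambda=0$. Hence $b_{\Omega_i^c}(v_0,v_0)$ need not vanish, and your identity $b_{\Omega_i^c}(w,w)=\sum_{l>m_i}\alpha_l^2\lambda_l^i$ is off by exactly this term, breaking both your direct argument for $b_{\Omega_i^c}(w,w)\ge0$ and the final equality in your proof of the second inequality. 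The paper avoids this by citing Lemma~\ref{lem_abs_proj} verbatim, whose abstract projector \eqref{eq_abs_proj} already places $v_0$ in its image so that $v-\Pi v$ carries no $X_0$-component; your computation in fact exposes that the concrete $\Pi_{m_i}^c$ of \eqref{eq:proj} and the abstract $\Pi$ of \eqref{eq_abs_proj} differ precisely by $v_0$, a discrepancy the paper does not comment on. Note too that once the second inequality is established, $0\le b_{\Omega_i^c}(w,w)$ is immediate from $c(w,w)\ge0$ and $\lambda_{m_i+1}^i>0$, so no separate spectral argument is needed.

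For the upper bound the paper completes the Pythagorean identity differently from your sketch. Rather than bounding the negative spectral part of $b_{\Omega_i^c}(\Pi_{m_i}^c v,\Pi_{m_i}^c v)$ via a variational characterisation (which you leave unexecuted), it writes also $b_{\Omega_i^c}(\Pi_{m_i}^c v,\Pi_{m_i}^c v)=\|\Pi_{m_i}^c v\|^2_{1,k,\Omega_i^c}-2k^2\|\Pi_{m_i}^c v\|^2_{\Omega_i^c}$, obtaining
\[
b_{\Omega_i^c}(w,w)=\|v\|^2_{1,k,\Omega_i^c}-\|\Pi_{m_i}^c v\|^2_{1,k,\Omega_i^c}-2k^2\bigl(\|v\|^2_{\Omega_i^c}-\|\Pi_{m_i}^c v\|^2_{\Omega_i^c}\bigr)\le\|v\|^2_{1,k,\Omega_i^c},
\]
the last step invoking an $L^2$-contraction $\|\Pi_{m_i}^c v\|_{\Omega_i^c}\le\|v\|_{\Omega_i^c}$ that the paper calls a ``projector property''.
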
 

\begin{proof}
The second inequality of \eqref{new_1} can be deduced from the results of Lemma \ref{lem_abs_proj} by setting $\texttt{b} \mapsto b_{\Omega_i^c}$, $c \mapsto \left( \Xi_i^c(\cdot), \Xi_i^c(\cdot) \right)_{1,\kappa,\Omega_i^c}$ and $m \mapsto m_i$. In order to prove the first inequality of \eqref{new_1} by using the projector property we see that
\begin{align*}
b_{\Omega_i^c}\!\left( v,v \right) &= b_{\Omega_i^c}\!\left( w + \Pi_{m_i}^c v, w + \Pi_{m_i}^c v \right)
= b_{\Omega_i^c}\!\left( w , w \right)  + b_{\Omega_i^c}\!\left( \Pi_{m_i}^c v, \Pi_{m_i}^c v \right).
\end{align*}
Taking into account that $b_{\Omega_i^c}(v,v) = \| v \|^2_{1, \kappa, {\Omega_i^c}} - 2 \kappa^2 \|v\|^2_{\Omega_i^c}$ we have
\begin{align*}
	b_{\Omega_i}(w,w) &= b_{\Omega_i^c}\!\left( v,v \right)  - b_{\Omega_i^c}\!\left( \Pi_{m_i}^c v, \Pi_{m_i}^c v \right) 
	=  \|v\|^2_{1,\kappa,\Omega_i^c} - 2 \kappa^2 \|v\|^2_{\Omega_i^c} - ( \| \Pi_{m_i}^c v\|^2_{1, \kappa, {\Omega_i^c}} - 2 \kappa^2 \|\Pi_{m_i}^c v\|^2_{\Omega_i^c} )
	\\[1ex]
	&= \|v\|^2_{1,\kappa,\Omega_i^c} - \| \Pi_{m_i}^c v\|^2_{1, \kappa, {\Omega_i^c}} -2k^2(\| v \|^2_{\Omega_i^c} -\| \Pi_{m_i}^c v \|^2_{\Omega_i^c}) \le \| v \|^2_{1, \kappa, {\Omega_i^c}},
\end{align*}
the last inequality results from the projector property $\| v \|_{\Omega_i^c} \ge  \| \Pi_{m_i}^c v \|_{\Omega_i}$.
\end{proof}

From these local error estimates, it is possible to build a global approximation property. 

\begin{lemma}[Global approximation property]
\label{Lemma_3_2}
Let $\lambda_\ell^i$, $m_i$ and $\Pi_{m_i}^c$ be as in Lemma~\ref{Lemma_3_1} and set
\[
	\Theta \assign \max_{1\le i\le N}\frac{1}{\lambda_{m_i+1}^i}.
\]  
Further, let $v \in V^h$ and define
\begin{equation}\label{def_z_0_z_i}
	z_0 \assign \sum_{i=1}^N \sum_{j=1}^{Q_i} E^f_{i,j} \Xi^f_{i,j} R^f_{i,j} \bigl(\Pi_{m_i}^c {v|_{\Omega_i^c}}\bigr) 
	\quad\text{and}\quad 
	z^f_{i,j} \assign \Xi^f_{i,j} R^f_{i,j} \bigl({v|_{\Omega_i^c}}-\Pi_{m_i}^c{v|_{\Omega_i^c}}\bigr),
\end{equation}
for $i = 1, \ldots ,N$ and $j = 1, \ldots ,Q$.
Then $z^f_{i,j}\in V^f_{i,j}$ for $i\in\{1,\ldots,N\}$ and $j\in\{1,\ldots,Q\}$,
\begin{equation}\label{inequ_sum_z_i}
	 \sum_{i=1}^N \sum_{j=1}^{Q_i} \| z^f_{i,j} \|_{1,k,\Omega^f_{i,j}}^2 \le \Lambda^2 \Theta\|v\|_{1,k}^2
\end{equation}
and
\begin{equation}\label{inequ_v_z_0}
	\inf_{z\in V_0}\|v-z\|_{1,k}^2 \le \|v-z_0\|_{1,k}^2 \le \Lambda^4\Theta\|v\|^2_{1,k}, 
\end{equation}
\end{lemma}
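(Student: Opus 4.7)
The plan is to establish the three claims in order, drawing on the projector estimates in Lemma~\ref{Lemma_3_1} and on the finite-overlap structure of the two-level decomposition.

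The membership $z^f_{i,j}\in V^f_{i,j}$ is immediate from Definition~\ref{def: 2_7}: the partition-of-unity operator $\Xi^f_{i,j}$ keeps only degrees of freedom internal to $\Omega^f_{i,j}$, so its image vanishes on $\partial\Omega^f_{i,j}\setminus\partial\Omega$, placing it in $V^f_{i,j}$.

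For the local-to-global stability bound \eqref{inequ_sum_z_i}, I would fix $i$ and set $w_i \assign v|_{\Omega_i^c} - \Pi_{m_i}^c v|_{\Omega_i^c}\in\widetilde V_i^c$. The heart of the proof is a fine-level partition-of-unity stability estimate of the form
\[
	\sum_{j=1}^{Q_i}\|\Xi^f_{i,j}R^f_{i,j}w_i\|^2_{1,\kappa,\Omega^f_{i,j}} \;\le\; \Lambda\,\|\Xi_i^c(w_i)\|^2_{1,\kappa,\Omega_i^c},
\]
which exploits the nesting $\Omega^f_{i,j}\subset\Omega_i^c$ (so every dof internal to a fine subdomain is also internal to the enclosing coarse subdomain) together with finite overlap of the fine subdomains within $\Omega_i^c$. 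Then Lemma~\ref{Lemma_3_1} supplies $\|\Xi_i^c(w_i)\|^2_{1,\kappa,\Omega_i^c}\le \Theta\, b_{\Omega_i^c}(w_i,w_i)\le\Theta\|v\|^2_{1,\kappa,\Omega_i^c}$. Summing over $i$ and using $\sum_{i=1}^N\|v\|^2_{1,\kappa,\Omega_i^c}\le\Lambda\|v\|^2_{1,\kappa}$ (a consequence of the definition of $\Lambda$ combined with the cover $\bigcup_j\Omega^f_{i,j}=\Omega_i^c$) delivers \eqref{inequ_sum_z_i}.

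For \eqref{inequ_v_z_0}, I would apply the fine-level partition-of-unity identity $v=\sum_{i,j}E^f_{i,j}\Xi^f_{i,j}R^f_{i,j}v$ on $V^h$. Since $\Omega^f_{i,j}\subset\Omega_i^c$, the restriction $R^f_{i,j}$ of $v$ agrees with the restriction of $v|_{\Omega_i^c}$, and the same holds with $v|_{\Omega_i^c}$ replaced by $\Pi_{m_i}^c v|_{\Omega_i^c}\in\widetilde V_i^c$, so that
\[
	v-z_0 \;=\; \sum_{i=1}^N\sum_{j=1}^{Q_i} E^f_{i,j}\,z^f_{i,j}.
\]
Applying the finite-overlap bound for the zero extensions and chaining with \eqref{inequ_sum_z_i} then gives \eqref{inequ_v_z_0}, the powers of $\Lambda$ arising from combining one overlap factor in the global reassembly with the $\Lambda^2\Theta$ of the intermediate bound.

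The main obstacle is the fine partition-of-unity stability estimate relating $\sum_j\|\Xi^f_{i,j}R^f_{i,j}w_i\|^2_{1,\kappa,\Omega^f_{i,j}}$ to $\|\Xi_i^c(w_i)\|^2_{1,\kappa,\Omega_i^c}$ with a constant that is a pure multiple of $\Lambda$. This requires careful dof-bookkeeping: the weights $1/\mu_l$ coming from the two partitions at the fine and coarse levels must combine compatibly, and no $\kappa$-dependence or inverse-mesh factor can be allowed to slip in, so that the sole source of the small factor on the right-hand side is the $\Theta=\tau^{-1}$ supplied by Lemma~\ref{Lemma_3_1}.
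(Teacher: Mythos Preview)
Your proposal is correct and follows essentially the same route as the paper: bound $\sum_{i,j}\|z^f_{i,j}\|_{1,k}^2$ via the fine-to-coarse partition-of-unity estimate $\sum_{i,j}\|z^f_{i,j}\|_{1,k,\Omega^f_{i,j}}^2\le\Lambda\sum_i\|\Xi_i^c(w_i)\|_{1,k,\Omega_i^c}^2$, apply both inequalities of Lemma~\ref{Lemma_3_1}, sum over $i$ with the coarse overlap bound, and then use the partition-of-unity identity $v-z_0=\sum_{i,j}E^f_{i,j}z^f_{i,j}$ together with one more finite-overlap estimate. The step you flag as the ``main obstacle'' is precisely what the paper records as an unproved ``observation'', so you have identified the only genuinely delicate point; your remark that the reassembly contributes a single overlap factor would in fact yield the slightly sharper constant $\Lambda^3\Theta$ in \eqref{inequ_v_z_0}, but the paper uses (and states) $\Lambda^2$ there, giving $\Lambda^4\Theta$.
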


\begin{proof}
Using Lemma~\ref{Lemma_3_1} and the observation that

\begin{equation*}
      \sum_{i=1}^N  \sum_{j=1}^{Q_i}  \left\| z^f_{i,j} \right\|_{1,k, \Omega^f_{i,j}}^2 \le \Lambda \sum_{i=1}^N \left\| \Xi_i^c \bigl({v|_{\Omega_i^c}}-\Pi_{m_i}^c{v|_{\Omega_i^c}}\bigr) \right\|_{1,k, \Omega_i}^2,
\end{equation*}

 we obtain
\begin{align}
	 \sum_{i=1}^N \sum_{j=1}^{Q_i} \| z^f_{i,j} \|_{1,k,\Omega_i}^2 
	&\le  \Lambda \sum_{i=1}^N\frac{1}{\lambda_{m_i+1}^i}
	b_{\Omega_i^c}\Bigl(\bigl({v|_{\Omega_i^c}}-\Pi_{m_i}^c{v|_{\Omega_i^c}}\bigr),\bigl({v|_{\Omega_i^c}}-\Pi_{m_i}^c{v|_{\Omega_i^c}}\bigr)\Bigr)
	\nonumber
	\\[1ex]
	&\le \Lambda \sum_{i=1}^N\frac{1}{\lambda_{m_i+1}^i}\big\|v|_{\Omega_i^c}\big\|_{1,k,\Omega_i^c}^2
	\le \Lambda  \Theta\sum_{i=1}^N\big\|v|_{\Omega_i^c}\big\|_{1,k,\Omega_i^c}^2
	\le \Lambda^2 \Theta\|v\|_{1,k}^2,
	\label{proof_inequ_sum_z_i}
\end{align}
which shows \eqref{inequ_sum_z_i}.
From this we can deduce that
\begin{align*}
    \|v-z_0\|_{1,\kappa}^2 &= \Bigg\|\sum_{i=1}^N \sum_{j=1}^{Q_i} E^f_{i,j} \Xi^f_{i,j} R^f_{i,j} \bigl(v|_{\Omega_i^c}\bigr)-\sum_{i=1}^N \sum_{j=1}^{Q_i} E^f_{i,j} \Xi^f_{i,j} R^f_{i,j} \bigl(\Pi_{m_i}^c {v|_{\Omega_i^c}}\bigr)\Bigg\|_{1,k}^2 \\
    &= \Bigg\|\sum_{i=1}^N \sum_{j=1}^{Q_i} E^f_{i,j} z^f_{i,j}\Bigg\|_{1,k}^2 \le \Lambda^2 \sum_{i=1}^N \sum_{j=1}^{Q_i}  \| z^f_{i,j} \|^2_{1, k, \Omega_i},
\end{align*}
which, together with \eqref{proof_inequ_sum_z_i}, proves the second inequality in \eqref{inequ_v_z_0}.
Since $z_0\in V_0$, also the first inequality in \eqref{inequ_v_z_0} follows.
%
\end{proof}

The following lemma shows that the GenEO coarse space, in combination with the local finite element space, allows for a stable decomposition. 
This property is a key component for bounding the condition number in the two-level Schwarz  preconditioner for positive definite cases, 
as used in \cite{Spillane:2014:ARC}.  It will be used for the same purpose in this work for the indefinite case. 

\begin{lemma}[Stable decomposition]
\label{lemma_3_3}
Let $v \in V^h$ and define $z^f_{i,j}$, $i=1,\ldots,Q$, as in \eqref{def_z_0_z_i}.
Then $v = z_0 + \sum_{i=1}^N \sum_{j=1}^{Q_i} E^f_{i,j} z^f_{i,j}$ and
\begin{equation*}
	\|z_0\|_{1,k}^2 + \sum_{i=1}^N \sum_{j=1}^{Q_i} \|z^f_{i,j}\|_{1,k,\Omega^f_{i,j}}^2 \le \bigl(2+3\Lambda^4 \Theta\bigr)\|v\|^2_{1,k}. 
\end{equation*}
\end{lemma}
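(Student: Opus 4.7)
\textbf{Proof plan for Lemma \ref{lemma_3_3}.}
The plan is to decompose the argument into two independent steps: first establish the identity, then bound each piece in the $\|\cdot\|_{1,k}$ norm using the estimates already available from Lemma~\ref{Lemma_3_2}. All the hard analytic work has in fact been done in the previous lemmas; what remains is a short bookkeeping argument.

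First, for the identity $v = z_0 + \sum_{i,j} E^f_{i,j} z^f_{i,j}$, I would use that the family $\{\Omega^f_{i,j}\}$ covers $\Omega$ and that the partition of unity operators $\Xi^f_{i,j}$ satisfy $\sum_{i,j} E^f_{i,j} \Xi^f_{i,j} R^f_{i,j} w = w$ for every $w \in V^h$ (this is the defining partition-of-unity property built into Definition~\ref{def: 2_7} via the weights $1/\mu_\ell$). Applying this both to $v$ itself and to the functions supported on $\Omega_i^c$, and noting that $R^f_{i,j}$ depends only on values on $\Omega^f_{i,j}\subset\Omega_i^c$, the terms involving $\Pi_{m_i}^c v|_{\Omega_i^c}$ in $z_0$ cancel with the same terms inside $z^f_{i,j}$, leaving precisely $\sum_{i,j} E^f_{i,j}\Xi^f_{i,j} R^f_{i,j} v = v$.

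Second, for the norm bound, note that Lemma~\ref{Lemma_3_2} already gives me
\[
	\sum_{i=1}^N\sum_{j=1}^{Q_i}\|z^f_{i,j}\|^2_{1,k,\Omega^f_{i,j}} \le \Lambda^2 \Theta\,\|v\|_{1,k}^2
	\qquad\text{and}\qquad
	\|v-z_0\|^2_{1,k} \le \Lambda^4 \Theta\,\|v\|^2_{1,k}.
\]
Then writing $z_0 = v - (v-z_0)$ and applying $(a+b)^2\le 2a^2+2b^2$ to the triangle inequality in $\|\cdot\|_{1,k}$ gives
\[
	\|z_0\|_{1,k}^2 \le 2\|v\|_{1,k}^2 + 2\|v-z_0\|_{1,k}^2 \le 2\bigl(1+\Lambda^4\Theta\bigr)\|v\|_{1,k}^2.
\]
Summing this with the bound on $\sum\|z^f_{i,j}\|^2_{1,k,\Omega^f_{i,j}}$ and using $\Lambda\ge1$ to absorb the $\Lambda^2\Theta$ term into $\Lambda^4\Theta$, I obtain the claimed constant $2+3\Lambda^4\Theta$.

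I do not expect any real obstacle here: the delicate work (the spectral bound on $v-\Pi_{m_i}^c v$ from Lemma~\ref{Lemma_3_1} and the finite-overlap bookkeeping producing the $\Lambda^2$ and $\Lambda^4$ factors) has already been carried out in Lemma~\ref{Lemma_3_2}. The only subtle point is to be careful that the identity uses the partition of unity on the \emph{fine} decomposition $\{\Omega^f_{i,j}\}$ rather than the coarse one, and that the restriction $R^f_{i,j}$ makes the projection $\Pi_{m_i}^c$ (defined on $\widetilde V_i^c$) compatible with the fine patches $\Omega^f_{i,j}\subset\Omega_i^c$; this nested structure is exactly the feature that Theorem~\ref{theorem: convergence} exploits to remove the previously required coupling between $H^c$ and the coarse-space size.
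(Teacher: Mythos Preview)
Your proposal is correct and follows essentially the same route as the paper: the identity is attributed to the partition-of-unity property, and the norm bound combines the two estimates of Lemma~\ref{Lemma_3_2} exactly as you describe, bounding $\|z_0\|_{1,k}^2$ via the triangle inequality and $(a+b)^2\le 2a^2+2b^2$, then adding the $\Lambda^2\Theta$ term and absorbing it using $\Lambda\ge 1$. Your commentary on the fine-versus-coarse partition of unity is a helpful elaboration of what the paper leaves implicit.
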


\begin{proof}
The relation $v = z_0 + \sum_{i=1}^N \sum_{j=1}^{Q_i} E^f_{i,j} z^f_{i,j}$ follows easily from the definition of the partition of unity.
From \eqref{inequ_v_z_0} we obtain
\[
	\|z_0\|_{1,k}^2 \le \Bigl(\|z_0-v\|_{1,k}+\|v\|_{1,k}\Bigr)^2
	\le 2\Bigl(\|z_0-v\|_{1,k}^2+\|v\|_{1,k}^2\Bigr)
	\le 2\bigl(1+\Lambda^4\Theta\bigr)\|v\|_{1,k}^2.
\]
Together with \eqref{inequ_sum_z_i} we arrive at
\[
	\|z_0\|_{1,k}^2 + \sum_{i=1}^N \sum_{j=1}^{Q_i} \|z^f_{i,j}\|_{1,k,\Omega_i^c}^2
	\le 2\bigl(1+\Lambda^4\Theta\bigr)\|v\|_{1,k}^2 + \Lambda^2\Theta\|v\|_{1,k}^2
	\le \bigl(2+3\Lambda^4\Theta\bigr)\|v\|_{1,k}^2
\]
since $\Lambda\ge1$.
%
\end{proof}

It is convenient to introduce here the projection operators $P^f_{i,j}: V^h \rightarrow V^f_{i,j}$, such that for each $i = 1, \ldots, N$, and $j = 1, \ldots, Q$,
\begin{equation}
	(P^f_{i,j} u, v)_{1,k,\Omega^f_{i,j}} =(u, E^f_{i,j} v)_{1,k,\Omega}. \quad \forall v \in V^f_{i,j} \label{eq: 3_10_nested}.
\end{equation}
In \cite[Section~2.2]{Toselli:2005:DDM} it is proved that these operators are well defined and orthogonal projections.
For the projector $P_0 : V^h \rightarrow V^0$, this is defined as
\begin{equation}
	(P_0 u, v)_{1,k,\Omega^f_{i,j}} = (u, v), \quad \forall v \in V_0. \label{eq: 3_10_zero}
\end{equation}
Moreover, we define the operator $P: V^h \rightarrow V^h$ as
\begin{equation}
	P \assign E_0P_0 + \sum_{i=1}^N \sum_{j=1}^{Q_i} E^f_{i,j} P^f_{i,j}. \label{eq: 3_11}
\end{equation} 

\begin{proposition}
	Under the same assumptions as in  Lemma~\ref{lemma_3_3}, any $u \in V^h$ satisfies
	\begin{equation}\label{eq: 3_12}
		\|u\|^2_{1,k} \le \bigl(2+3\Lambda^4\Theta\bigr)(Pu,u)_{1,k}. 
	\end{equation}
\end{proposition}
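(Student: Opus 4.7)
The plan is to use the stable decomposition from Lemma~\ref{lemma_3_3} together with the fact that $P_0$ and $P^f_{i,j}$ are $(\cdot,\cdot)_{1,k}$-orthogonal projections (when viewed through their extensions $E_0$ and $E^f_{i,j}$) onto the corresponding subspaces of $V^h$. This gives the standard lower bound for the additive Schwarz operator, but with the stable-splitting constant from Lemma~\ref{lemma_3_3} in place of the usual SPD constant.

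First I would take an arbitrary $u\in V^h$ and invoke Lemma~\ref{lemma_3_3} to produce $z_0\in V_0$ and $z^f_{i,j}\in V^f_{i,j}$ with
\[
	u = z_0 + \sum_{i=1}^N\sum_{j=1}^{Q_i} E^f_{i,j}z^f_{i,j},
	\qquad
	\|z_0\|^2_{1,k} + \sum_{i=1}^N\sum_{j=1}^{Q_i}\|z^f_{i,j}\|^2_{1,k,\Omega^f_{i,j}}
	\le \bigl(2+3\Lambda^4\Theta\bigr)\|u\|^2_{1,k}.
\]
Writing $z_0 = E_0\hat{z}_0$ with $\hat z_0\in V_0$, I would then expand $\|u\|^2_{1,k}=(u,u)_{1,k}$ by substituting this decomposition into the second slot and using the defining identities \eqref{eq: 3_10_nested} and \eqref{eq: 3_10_zero} of the projectors $P^f_{i,j}$ and $P_0$ to rewrite
\[
	(u,E^f_{i,j}z^f_{i,j})_{1,k} = (P^f_{i,j}u,z^f_{i,j})_{1,k,\Omega^f_{i,j}},
	\qquad
	(u,E_0\hat z_0)_{1,k} = (P_0u,\hat z_0)_{1,k}.
\]

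Next I would apply Cauchy--Schwarz to each term and then the discrete Cauchy--Schwarz inequality over the index $(i,j)$ together with the coarse contribution, yielding
\[
	\|u\|^2_{1,k}
	\le \Biggl(\|P_0u\|^2_{1,k} + \sum_{i,j}\|P^f_{i,j}u\|^2_{1,k,\Omega^f_{i,j}}\Biggr)^{\!1/2}
	\Biggl(\|z_0\|^2_{1,k} + \sum_{i,j}\|z^f_{i,j}\|^2_{1,k,\Omega^f_{i,j}}\Biggr)^{\!1/2}.
\]
The second factor is controlled by Lemma~\ref{lemma_3_3}. For the first factor I would use once more the projector identities with $v=P_0u$ and $v=P^f_{i,j}u$ to recognise
\[
	(Pu,u)_{1,k} = \|P_0u\|^2_{1,k} + \sum_{i=1}^N\sum_{j=1}^{Q_i}\|P^f_{i,j}u\|^2_{1,k,\Omega^f_{i,j}}.
\]

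Substituting these two identifications into the Cauchy--Schwarz estimate gives
\[
	\|u\|^2_{1,k} \le (Pu,u)_{1,k}^{1/2}\cdot\bigl(2+3\Lambda^4\Theta\bigr)^{1/2}\|u\|_{1,k},
\]
and dividing by $\|u\|_{1,k}$ and squaring yields \eqref{eq: 3_12}. No serious obstacle is expected: the only point to handle with care is the book-keeping between a function $z_0\in V_0\subset V^h$ and its pre-image under the embedding $E_0$, so that the defining identity of $P_0$ is applied in the correct Hilbert-space setting. Once that is in place, the argument is an essentially cosmetic adaptation of the classical lower bound for the two-level additive Schwarz operator in the SPD case to the present $k$-weighted inner product $(\cdot,\cdot)_{1,k}$.
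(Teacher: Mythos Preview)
Your proposal is correct and follows essentially the same route as the paper: decompose $u$ via Lemma~\ref{lemma_3_3}, push the inner product through the projector identities \eqref{eq: 3_10_nested}--\eqref{eq: 3_10_zero}, apply Cauchy--Schwarz termwise and then over the index set, identify the resulting sum of squared projections with $(Pu,u)_{1,k}$, and divide through. The only cosmetic difference is that you are slightly more explicit about the embedding $E_0$ and the distinction between $z_0\in V_0\subset V^h$ and its pre-image $\hat z_0$, which the paper silently conflates.
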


\begin{proof}
Using the definition of $P_i$, the Cauchy--Schwarz inequality and Lemma~\ref{lemma_3_3} we get
\begin{align*}
	\|u\|_{1,k}^2 &= \Biggl(u, z_0 + \sum_{i=1}^N \sum_{j=1}^{Q_i} E^f_{i,j} z^f_{i,j}\Biggr)_{1,k} 
	= (u, z_0)_{1,k} + \Biggl(u,\sum_{i=1}^N \sum_{j=1}^{Q_i} E^f_{i,j} z^f_{i,j}\Biggr)_{1,k} \\ 
    &= (P_0u, z_0)_{1,k} + \sum_{i=1}^N \sum_{j=1}^{Q_i}( P^f_{i,j} u, z^f_{i,j})_{1,k, \Omega^f_{i,j}} 
    \le \| P_0u\|_{1,k} \|z_0\|_{1,k} + \sum_{i=1}^N \sum_{j=1}^{Q_i}\| P^f_{i,j} u \|_{1,k, \Omega^f_{i,j}} \|z^f_{i,j}\|_{1,k, \Omega^f_{i,j}} \\[1ex]
    &\le \left(\| P_0u\|_{1,k}^2 + \sum_{i=1}^N \sum_{j=1}^{Q_i}\| P^f_{i,j} u \|_{1,k, \Omega^f_{i,j}}^2 \right)^\frac{1}{2} (2+3\Lambda^4\Theta\bigr)^\frac{1}{2}\|u\|_{1,k} \\[1ex]
	&= \left(( E_0 P_0u, u)_{1,k} + \sum_{i=1}^N \sum_{j=1}^{Q_i} ( E^f_{i,j} P^f_{i,j} u, u)_{1,k} \right)^\frac{1}{2} (2+3\Lambda^4\Theta\bigr)^\frac{1}{2}\|u\|_{1,k} \\[1ex]
    &= \left(( E_0 P_0u, u)_{1,k} +  \left( \sum_{i=1}^N \sum_{j=1}^{Q_i}E^f_{i,j} P^f_{i,j} u, u\right)_{1,k} \right)^\frac{1}{2} (2+3\Lambda^4\Theta\bigr)^\frac{1}{2}\|u\|_{1,k} \\[1ex]
    &= \left( E_0 P_0 u + \sum_{i=1}^N \sum_{j=1}^{Q_i}E^f_{i,j} P^f_{i,j} u, u \right)_{1,k} ^\frac{1}{2} (2+3\Lambda^4\Theta\bigr)^\frac{1}{2}\|u\|_{1,k} = ( P u,u)_{1,k}^\frac{1}{2}\bigl(2+3\Lambda^4\Theta\bigr)^\frac{1}{2}\|u\|_{1,k} 
\end{align*}
and the inequality in \eqref{eq: 3_12} follows.
\end{proof}


\subsection[Existence and stability of $T^f_{i,j}$, $i=1,\ldots,N$]{Solvability and stability of {\boldmath$T^f_{i,j}$}, {\boldmath$i=1,\ldots,N$} and {\boldmath$j=1,\ldots,Q$}}

The next lemma shows that $T^f_{i,j}$ is well defined for $i = 1, \ldots, N$ and $j = 1, \ldots, Q$, if the subdomains are small
enough with respect to $k$.

\begin{lemma}[$T^f_{i,j}$ is well defined for $j = 1, \ldots, Q$]
\label{lemma_3_4}
If $H^f k < \sqrt{2}$, then $b_{\Omega^f_{i,j}}(\cdot,\cdot)$ is positive definite and the operators $T^f_{i,j}$, $j=1,\ldots,Q$ are well defined.
\end{lemma}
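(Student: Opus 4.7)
The plan is to show that $b_{\Omega^f_{i,j}}$ is coercive on $V^f_{i,j}$ under the hypothesis $H^f k < \sqrt{2}$, and then invoke a standard Lax--Milgram / finite-dimensional argument to obtain $T^f_{i,j}$.

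First I would take $u \in V^f_{i,j} \subset H^1_0(\Omega^f_{i,j})$ and apply the Friedrichs-type estimate \eqref{eq: 2_16} from Lemma~\ref{lemma: 2.5}, combined with Assumption~\ref{ass: 2_1} (which normalises $\amin = 1$) to bound the $L^2$-norm on the small subdomain by the $a$-norm: since $\|\nabla u\|_{\Omega^f_{i,j}}^2 \le a_{\Omega^f_{i,j}}(u,u) = \|u\|_{a,\Omega^f_{i,j}}^2$, we get
\[
	\|u\|_{\Omega^f_{i,j}}^2 \le \frac{(H^f)^2}{2}\,\|u\|_{a,\Omega^f_{i,j}}^2.
\]
Substituting this into the identity $b_{\Omega^f_{i,j}}(u,u) = \|u\|_{a,\Omega^f_{i,j}}^2 - k^2\|u\|^2_{\Omega^f_{i,j}}$ yields
\[
	b_{\Omega^f_{i,j}}(u,u) \ge \left(1 - \frac{k^2 (H^f)^2}{2}\right) \|u\|_{a,\Omega^f_{i,j}}^2.
\]
The hypothesis $H^f k < \sqrt{2}$ makes the constant strictly positive, and since $\|\cdot\|_{a,\Omega^f_{i,j}}$ is a norm on $V^f_{i,j}$ (functions in $V^f_{i,j}$ vanish on $\partial \Omega^f_{i,j}$), this shows $b_{\Omega^f_{i,j}}$ is symmetric positive definite on $V^f_{i,j}$.

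For the well-posedness of $T^f_{i,j}$, recall its defining relation \eqref{eq: bT}: for given $u \in V^h$ we must find $T^f_{i,j} u \in V^f_{i,j}$ such that $b_{\Omega^f_{i,j}}(T^f_{i,j} u, v) = b_{\Omega_i^c}(u, E^f_{i,j} v)$ for all $v \in V^f_{i,j}$. The right-hand side defines a linear functional on the finite-dimensional space $V^f_{i,j}$, and the left-hand side is a symmetric positive definite bilinear form on the same space by what was just proved. Hence a direct Riesz representation argument on the finite-dimensional Hilbert space $(V^f_{i,j}, b_{\Omega^f_{i,j}})$ produces a unique $T^f_{i,j} u$, so the operator is well defined on all of $V^h$.

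I do not expect any genuine obstacle here: the only quantitative point is the constant $\sqrt{2}$ in the threshold, which is exactly what the Friedrichs inequality \eqref{eq: 2_15} delivers, and the remainder is bookkeeping. The argument is essentially the same trick used in classical Helmholtz decomposition analyses to guarantee that the indefinite form becomes positive definite once subdomains are sufficiently small relative to the wave-number.
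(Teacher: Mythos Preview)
Your argument is correct and essentially identical to the paper's: both use the Friedrichs inequality \eqref{eq: 2_16} to show coercivity of $b_{\Omega^f_{i,j}}$ on $V^f_{i,j}$ under $H^f k<\sqrt{2}$ (the paper writes the lower bound in terms of $\|u\|_{\Omega^f_{i,j}}^2$ rather than $\|u\|_{a,\Omega^f_{i,j}}^2$, which is equivalent), and then invoke Lax--Milgram/Riesz to conclude that $T^f_{i,j}$ is well defined.
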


\begin{proof}
Using the definition of $b_{\Omega^f_{i,j}}(\cdot,\cdot)$ and the Friedrichs inequality we obtain
\begin{equation*}
	b_{\Omega^f_{i,j}}(u,u) = a_{\Omega^f_{i,j}}(u,u) - k^2(u,u)_{\Omega^f_{i,j}} 
	\ge \frac{2}{{H^f}^2}\|u\|_{\Omega^f_{i,j}}^2 - k^2\|u\|_{\Omega^f_{i,j}}^2 
	= \frac{2-{H^f}^2k^2}{{H^f}^2}\|u\|_{\Omega^f_{i,j}}^2,
\end{equation*}
and hence $b_{\Omega^f_{i,j}}(\cdot,\cdot)$ is positive definite.  By the Lax--Milgram Lemma, this ensures that $T^f_{i,j}$ is well defined.  
\end{proof}

\begin{remark}
	Whilst this a sufficient condition for $T^f_{i,j}$ to be well defined, it is not a necessary condition. 
	In general, if $k$ is different from an eigenvalue of $a_{\Omega^f_{i,j}}$, then $T^f_{i,j}$ will be well defined. 
\end{remark}

Before we proceed with the stability of $T_{i,j}^f$, we formulate useful estimates for the forms $b$ and $b_{\Omega_i}$.

\begin{lemma}\label{lem:est_b}
	The following estimates are valid:
	\begin{alignat*}{2}
		|b(u,v)| &\le \|u\|_{1,k}\|v\|_{1,k} \qquad && \text{for} \ u,v\in V^h,
		\\[1ex]
		|b_{D}(u,v)| &\le \|u\|_{1,k,D}\|v\|_{1,k,D} \qquad && 
		\text{for} \,u,v,\in\widetilde V_D.
	\end{alignat*}
\end{lemma}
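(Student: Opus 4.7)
The plan is to treat $b$ as the difference of two bilinear forms, apply Cauchy--Schwarz separately to each, and then combine via the discrete Cauchy--Schwarz inequality in $\RR^2$. The key observation is that
\[
    b(u,v) = a(u,v) - k^2(u,v), \qquad \|u\|_{1,k}^2 = a(u,u) + k^2(u,u),
\]
so the $(1,k)$-norm is precisely the $\ell^2$-combination of $\|u\|_a$ and $k\|u\|$, which is exactly what the two-step Cauchy--Schwarz produces.

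I would first observe that, since $a$ is SPD (Assumption~\ref{ass: 2_1}) and $(\cdot,\cdot)$ is an inner product, the usual Cauchy--Schwarz inequality gives
\[
    |a(u,v)| \le \|u\|_a\|v\|_a, \qquad |(u,v)| \le \|u\|\,\|v\|.
\]
Then
\[
    |b(u,v)| \le |a(u,v)| + k^2|(u,v)| \le \|u\|_a\|v\|_a + (k\|u\|)(k\|v\|).
\]
Applying the Cauchy--Schwarz inequality in $\RR^2$ to the pairs $(\|u\|_a,k\|u\|)$ and $(\|v\|_a,k\|v\|)$ yields
\[
    \|u\|_a\|v\|_a + (k\|u\|)(k\|v\|) \le \bigl(\|u\|_a^2+k^2\|u\|^2\bigr)^{1/2}\bigl(\|v\|_a^2+k^2\|v\|^2\bigr)^{1/2} = \|u\|_{1,k}\|v\|_{1,k},
\]
which is the first claim.

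For the local statement I would repeat the argument verbatim with $a$ and $(\cdot,\cdot)$ replaced by $a_D$ and $(\cdot,\cdot)_D$; the only facts used are that $a_D$ is symmetric positive semi-definite (which follows from Assumption~\ref{ass: 2_1}(i) restricted to $D$) and that $(\cdot,\cdot)_D$ is an inner product on $\widetilde V_D$, so Cauchy--Schwarz applies in both cases and the $(1,k,D)$-norm again combines the two as an $\ell^2$-sum. There is no real obstacle here; the proof is a two-line application of Cauchy--Schwarz done twice, and the statement is purely a recording of bilinear-form continuity to be invoked later in the GMRES upper-bound estimates.
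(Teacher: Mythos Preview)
Your proof is correct and essentially identical to the paper's own argument: the paper also writes $|b(u,v)|\le\|u\|_a\|v\|_a+k\|u\|\,k\|v\|$ via Cauchy--Schwarz on each term, then bounds this by $\sqrt{\|u\|_a^2+k^2\|u\|^2}\cdot\sqrt{\|v\|_a^2+k^2\|v\|^2}$, and declares the local estimate analogous.
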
 

\begin{proof}
We only prove the first inequality; the proof of the second one is analogous.
Let $u,v\in V^h$.  Using Cauchy--Schwarz inequalities we obtain
\begin{align*}
	|b(u,v)| &= |a(u,v)-k^2(u,v)|
	\le |a(u,v)| + k^2|(u,v)|
	\le \|u\|_a\|v\|_a + k\|u\|\,k\|v\|
	\\[0.5ex]
	&\le \sqrt{\|u\|_a^2+k^2\|u\|^2}\cdot\sqrt{\|v\|_a^2+k^2\|v\|^2}
	= \|u\|_{1,k}\|v\|_{1,k}.
\end{align*}
\end{proof}

In order to show the robustness of the additive Schwarz method, it is necessary to prove the stability estimates 
for the operators $T^f_{i,j}$.

\begin{lemma}[Stability of $T^f_{i,j}$, $j=1,\ldots,Q$]
\label{lemma_3_6}
Suppose that $H^f\kappa \le \frac{\sqrt{2}\,}{2}$.  Then, for all $u \in V^h$,
\begin{equation}\label{eq: 3_31}
	\|T^f_{i,j} u\|_{1,k,\Omega^f_{i,j}} \le 2\big\|u|_{\Omega^f_{i,j}}\big\|_{1,k,\Omega^f_{i,j}}.
\end{equation}
\end{lemma}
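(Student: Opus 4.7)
My plan is to test the defining equation \eqref{eq: bT} of $T^f_{i,j}$ against $v = T^f_{i,j} u$ and then to use a G\r{a}rding-type lower bound for $b_{\Omega^f_{i,j}}$ on $V^f_{i,j}$ together with the continuity of $b_{\Omega^f_{i,j}}$ from Lemma~\ref{lem:est_b} to isolate $\|T^f_{i,j} u\|_{1,k,\Omega^f_{i,j}}$. The smallness condition $H^f k \le \sqrt{2}/2$ is exactly what is needed so that the reaction term $-k^2\|\cdot\|_{\Omega^f_{i,j}}^2$ absorbs at most a quarter of the diffusion term when $a_{\min} = 1$.

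First I would observe that since $E^f_{i,j} v$ is the extension of $v$ by zero and $v$ vanishes on $\partial \Omega^f_{i,j}$, we have $b_{\Omega_i^c}(u, E^f_{i,j} v) = b_{\Omega^f_{i,j}}(u, v)$ for every $v \in V^f_{i,j}$. Hence the defining equation of $T^f_{i,j}$ simplifies to $b_{\Omega^f_{i,j}}(T^f_{i,j} u, v) = b_{\Omega^f_{i,j}}(u|_{\Omega^f_{i,j}}, v)$ for all $v \in V^f_{i,j}$. Taking $v = T^f_{i,j} u \in V^f_{i,j}$ yields the identity
\begin{equation*}
    b_{\Omega^f_{i,j}}(T^f_{i,j} u, T^f_{i,j} u) = b_{\Omega^f_{i,j}}\bigl(u|_{\Omega^f_{i,j}}, T^f_{i,j} u\bigr).
\end{equation*}

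Next I would bound each side. For the right-hand side, Lemma~\ref{lem:est_b} gives $|b_{\Omega^f_{i,j}}(u|_{\Omega^f_{i,j}}, T^f_{i,j} u)| \le \|u|_{\Omega^f_{i,j}}\|_{1,k,\Omega^f_{i,j}} \|T^f_{i,j} u\|_{1,k,\Omega^f_{i,j}}$. For the left-hand side, since $T^f_{i,j} u \in V^f_{i,j} \subset H^1_0(\Omega^f_{i,j})$, I would apply the Friedrichs inequality \eqref{eq: 2_16} together with $H^f k \le \sqrt{2}/2$ to get $k^2 \|T^f_{i,j} u\|_{\Omega^f_{i,j}}^2 \le \tfrac{(H^f k)^2}{2} \|T^f_{i,j} u\|_{a,\Omega^f_{i,j}}^2 \le \tfrac{1}{4}\|T^f_{i,j} u\|_{a,\Omega^f_{i,j}}^2$. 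Writing $w = T^f_{i,j} u$, this gives on one hand $b_{\Omega^f_{i,j}}(w,w) \ge \tfrac{3}{4}\|w\|_{a,\Omega^f_{i,j}}^2$ and on the other $\|w\|_{1,k,\Omega^f_{i,j}}^2 \le \tfrac{5}{4}\|w\|_{a,\Omega^f_{i,j}}^2$, whence
\begin{equation*}
    b_{\Omega^f_{i,j}}(T^f_{i,j} u, T^f_{i,j} u) \ge \tfrac{3}{5}\,\|T^f_{i,j} u\|_{1,k,\Omega^f_{i,j}}^2.
\end{equation*}

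Combining the two estimates gives $\tfrac{3}{5}\,\|T^f_{i,j} u\|_{1,k,\Omega^f_{i,j}}^2 \le \|u|_{\Omega^f_{i,j}}\|_{1,k,\Omega^f_{i,j}} \|T^f_{i,j} u\|_{1,k,\Omega^f_{i,j}}$ and hence $\|T^f_{i,j} u\|_{1,k,\Omega^f_{i,j}} \le \tfrac{5}{3}\,\|u|_{\Omega^f_{i,j}}\|_{1,k,\Omega^f_{i,j}} \le 2\,\|u|_{\Omega^f_{i,j}}\|_{1,k,\Omega^f_{i,j}}$, which is \eqref{eq: 3_31}. The only delicate point is the G\r{a}rding-type lower bound, and even that is a one-line application of Friedrichs once the hypothesis $H^f k \le \sqrt{2}/2$ is in place; I do not anticipate any real obstacle.
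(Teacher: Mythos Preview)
Your proof is correct and follows essentially the same approach as the paper: test \eqref{eq: bT} with $v = T^f_{i,j} u$, use the continuity estimate of Lemma~\ref{lem:est_b} on the right-hand side, and use Friedrichs on $V^f_{i,j}$ to control the $-k^2\|\cdot\|^2$ term. The only cosmetic difference is that the paper writes $\|w\|_{1,k}^2 = b_{\Omega^f_{i,j}}(w,w) + 2k^2\|w\|_{\Omega^f_{i,j}}^2$ and bounds the last term by $(H^f k)^2\|w\|_{1,k}^2$, arriving at $(1-(H^f k)^2)\|w\|_{1,k} \le \|u|_{\Omega^f_{i,j}}\|_{1,k}$, whereas you package the same Friedrichs estimate as a coercivity bound $b_{\Omega^f_{i,j}}(w,w) \ge \tfrac{3}{5}\|w\|_{1,k}^2$; your route even yields the slightly sharper constant $5/3$.
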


\begin{proof}
	Using the definition of $T^f_{i,j}$ in \eqref{eq: bT}, Lemma~\ref{lem:est_b} and the Friedrichs inequality \eqref{eq: 2_16} we obtain
	\begin{align*}
		\|T^f_{i,j} u\|^2_{1,k,\Omega^f_{i,j}} 
		&= b_{\Omega^f_{i,j}}(T^f_{i,j} u,T^f_{i,j} u) + 2k^2\|T^f_{i,j} u\|_{\Omega_i} 
		= b_{\Omega_i}\bigl(u|_{\Omega^f_{i,j}},T^f_{i,j} u\bigr) + 2k^2\|T^f_{i,j} u\|_{\Omega^f_{i,j}}
		\\[1ex]
		&\le \big\|u|_{\Omega^f_{i,j}}\big\|_{1,k,\Omega^f_{i,j}}\|T^f_{i,j} u\|_{1,k,\Omega^f_{i,j}} + 2k^2\|T^f_{i,j} u\|_{\Omega^f_{i,j}}
		\\[1ex]
		&\le \big\|u|_{\Omega^f_{i,j}}\big\|_{1,k,\Omega^f_{i,j}}\|T^f_{i,j} u\|_{1,k,\Omega^f_{i,j}} + k^2(H^f)^2\|T^f_{i,j} u\|^2_{1,k,\Omega^f_{i,j}},
	\end{align*}
	which implies that
	\begin{equation*}
		\bigl(1-k^2 (H^f)^2\bigr)\|T^f_{i,j} u\|_{1,k,\Omega^f_{i,j}} \le \big\|u|_{\Omega^f_{i,j}}\big\|_{1,k,\Omega^f_{i,j}}.
	\end{equation*}
	Using the condition $H^f \kappa\le\frac{\sqrt{2}\,}{2}$ we can deduce the required bound.
\end{proof}

\subsection[Existence and stability of $T_0$]{Existence and stability of {\boldmath$T_0$}}
To ensure that $T_0$ is well defined, a condition on $\Theta$ is required.

\begin{lemma}[$T_0$ is well defined] \label{lemma_3_5}
	Suppose that 
	\begin{equation}\label{3_7_a}
		\sqrt{2}\,k\Lambda^2\Theta^\frac{1}{2}(1+\Cstab) < 1. 
	\end{equation}
	Then there exists $h_1 > 0$ such that, for all $h\in(0,h_1)$, the operator $T_0$ is well defined.
\end{lemma}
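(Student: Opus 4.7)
The plan is to verify that the bilinear form $(v,w)\mapsto b(E_0 v,E_0 w)$ is nondegenerate on the finite-dimensional space $V_0$; since $V_0$ is finite-dimensional, this is equivalent to $T_0$ being well defined.  So I suppose $v\in V_0$ satisfies $b(E_0 v,E_0 w)=0$ for every $w\in V_0$ and aim to show that $\psi\assign E_0 v$ must vanish.  The key observation is that the choice $w=v$ gives $b(\psi,\psi)=0$, which forces $\|\psi\|_a^2=k^2\|\psi\|^2$ and hence
\[
    \|\psi\|_{1,k}^2 \;=\; 2k^2\|\psi\|^2.
\]
The task is now to derive a reverse bound, of the form $\|\psi\|^2\le(\text{small})\,\|\psi\|_{1,k}^2$, that contradicts this identity unless $\psi=0$.

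For the reverse bound I would use an Aubin--Nitsche duality argument.  Let $z\in H^1_0(\Omega)$ solve $b(w,z)=(\psi,w)$ for all $w\in H^1_0(\Omega)$; by the symmetry of $b$ and Assumption~\ref{Ass: 2_3}, $\|z\|_{1,k}\le\Cstab\|\psi\|$.  Lemma~\ref{schatz_wang}, applied with a tolerance $\varepsilon>0$ to be fixed, supplies $h_1>0$ such that for every $h\in(0,h_1)$ the Galerkin problem $b(w,z_h)=(\psi,w)$ for all $w\in V^h$ has a unique solution $z_h\in V^h$.  Combining the two inequalities \eqref{eq: 3_13_a} and \eqref{eq: 2_13} with the definition of $\|\cdot\|_{1,k}$ yields a bound of the form $\|z-z_h\|_{1,k}\le C(k,\amax)\,\varepsilon\,\|\psi\|$; by choosing $\varepsilon$ small enough, this gives $\|z-z_h\|_{1,k}\le\|\psi\|$ and therefore $\|z_h\|_{1,k}\le(1+\Cstab)\|\psi\|$.

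To exploit the hypothesis, I apply the global approximation estimate \eqref{inequ_v_z_0} of Lemma~\ref{Lemma_3_2} to $z_h\in V^h$: there exists $z_0\in V_0$ with $\|z_h-z_0\|_{1,k}\le\Lambda^2\Theta^{1/2}\|z_h\|_{1,k}$.  Since $z_0\in V_0$, the hypothesis on $\psi$ yields $b(\psi,z_0)=0$, so that
\[
    \|\psi\|^2 \;=\; b(\psi,z_h) \;=\; b(\psi,z_h-z_0)
    \;\le\; \|\psi\|_{1,k}\|z_h-z_0\|_{1,k}
    \;\le\; \Lambda^2\Theta^{1/2}\|\psi\|_{1,k}\|z_h\|_{1,k},
\]
where Lemma~\ref{lem:est_b} is used in the middle step.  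Plugging in $\|\psi\|_{1,k}=\sqrt{2}\,k\|\psi\|$ and $\|z_h\|_{1,k}\le(1+\Cstab)\|\psi\|$ gives
\[
    \|\psi\|^2 \;\le\; \sqrt{2}\,k\,\Lambda^2\Theta^{\frac{1}{2}}(1+\Cstab)\,\|\psi\|^2,
\]
which, by hypothesis \eqref{3_7_a}, forces $\psi=0$ and hence $v=0$.

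The main obstacle is technical rather than structural: one must calibrate the Schatz--Wang tolerance $\varepsilon$ so that the discrete error $\|z-z_h\|_{1,k}$ is controlled in the $k$-weighted norm with a constant small enough that the factor $(1+\Cstab)$ (rather than something worse like $(1+\Cstab+C\varepsilon)$ with $C\varepsilon$ not absorbed) appears in the final estimate, matching exactly the hypothesis \eqref{3_7_a}.  Once this $k$-dependent choice of $h_1$ is made, the remainder of the proof is a clean duality pairing combined with the coarse-space approximation property.
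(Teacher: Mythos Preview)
Your proposal is correct and follows essentially the same argument as the paper: a contradiction via an Aubin--Nitsche duality argument, using $b(\psi,\psi)=0$ to get $\|\psi\|_{1,k}=\sqrt{2}\,k\|\psi\|$, the Schatz--Wang result to bound $\|z_h\|_{1,k}\le(1+\Cstab)\|\psi\|$, and the coarse-space approximation property (Lemma~\ref{Lemma_3_2}) to close the loop. The only difference is the order of presentation (you derive the $\sqrt{2}\,k$ identity first, the paper does it midway), and your handling of the $\varepsilon$-calibration is exactly what the paper does in \eqref{eq: w_h_comb}.
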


\begin{proof}
	Assume that there exists an $w_0 \in V_0\backslash \lbrace 0 \rbrace$ such that
	\begin{equation}\label{eq:w_0_sol}
		b(w_0,z) = 0 \qquad \text{for all} \ z \in V_0.
	\end{equation}
	Let $w \in H^1_0(\Omega)$ be the solution of 
	\begin{equation*}
		b(w,v)=(w_0,v) \qquad \text{for all} \ v \in H^1_0(\Omega).
	\end{equation*}
	Lemma~\ref{schatz_wang} implies that there exists an $h_1 > 0$ such that, for all $h \in (0, h_1)$, 
	there is a solution $w_h \in V^h$ of
	\begin{equation*}
		b(w_h, v) = (w_0,v) \qquad \text{for all} \ v \in V^h.
	\end{equation*}
	This holds, in particular, for $v=w_0$.  This, together with Lemma~\ref{lem:est_b}, yields that, for all $z \in V_0$,
	\begin{equation*}
		\|w_0\|^2 = b(w_h, w_0) = b(w_h, w_0) - b(z, w_0) = b(w_h - z, w_0)
		\le \|w_h - z\|_{1,k} \|w_0\|_{1,k}.
	\end{equation*}
	As this is true for all $z \in V_0$, we obtain
	\begin{equation}\label{eq: 3_5_f_sq}
		\|w_0\|^2 \le \|w_0\|_{1,k}\inf_{z \in V_0}\|w_h-z\|_{1,k}
		\le \|w_0\|_{1,k}\Lambda\Theta^{\frac{1}{2}}\|w_h\|_{1,k}
	\end{equation}
	by Lemma~\ref{Lemma_3_2}.  Equation \eqref{eq:w_0_sol} with $v=w_0$ implies that
	\begin{equation*}
		0 = b(w_0,w_0) = \|w_0\|_{1,k}^2-2k^2\|w_0\|^2,
	\end{equation*}
	which, together with \eqref{eq: 3_5_f_sq}, yields
	\begin{equation}\label{eq: 3_5_1} 
		\|w_0\|^2 \le \sqrt{2}\,k\|w_0\|\Lambda^2\Theta^\frac{1}{2}\|w_h\|_{1,k}. 
	\end{equation}
	It is possible to combine \eqref{eq: 2_10} and \eqref{eq: 2_13} to obtain
	\begin{align}
		\|w_h\|_{1,k} &\le \|w\|_{1,k} + \|w-w_h\|_{1,k}
		= \|w\|_{1,k} + \sqrt{\|w-w_h\|_a^2 + k^2\|w-w_h\|\|^2}
		\nonumber\\[0.5ex]
		&\le \Cstab\|w_0\| + \sqrt{\amax\|w-w_h\|_{H^1(\Omega)}^2 + k^2\|w-w_h\|^2}
		\nonumber\\[0.5ex]
		&\le \Cstab\|w_0\| + \sqrt{\amax\varepsilon^2\|w_0\|^2+k^2\varepsilon^4\|w_0\|^2}
		\nonumber\\[0.5ex]
		&\le (\Cstab+1)\|w_0\|
		\label{eq: w_h_comb}
	\end{align}
	if $\varepsilon$ is chosen small enough.
	With \eqref{eq: w_h_comb} used in \eqref{eq: 3_5_1}, we arrive at
	\begin{equation*}
        \|w_0\|^2 \le \sqrt{2}\,k\Lambda^2\Theta^\frac{1}{2}(1+\Cstab)\|w_0\|^2.
    \end{equation*}
	Together with the condition in \eqref{3_7_a}, this leads to a contradiction.
\end{proof}

Another requirement is to find the stability conditions for $T_0$, which is done in the next lemma.

\begin{lemma}[Stability of $T_0$]
\label{lemma: 3_7}
Assume that \eqref{3_7_a} is satisfied.  Then there exists $h_1>0$ such that, for $h\in(0,h_1)$,
\begin{equation}
	\|T_0 u - u\| \le  \Lambda^2\Theta^\frac{1}{2}(1 + \Cstab)\|T_0 u - u\|_{1,k} \qquad \text{for all} \ u \in V_h. 
\label{3_7_b}
\end{equation}
Suppose, in addition that, 
\begin{equation}\label{3_7_c}
    2k\Lambda^2\Theta^\frac{1}{2} (1 + \Cstab) \le \frac{1}{2};
\end{equation} 
then 
\begin{equation}\label{3_7_d}
	\|u - T_0 u\|_{1,k} \le 2 \|u\|_{1,k} \qquad \text{for all} \ u \in V^h.
\end{equation}
\end{lemma}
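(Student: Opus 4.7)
The plan is to prove the two estimates of the lemma in the stated order, using a duality argument for the first and Galerkin orthogonality combined with the first for the second.

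For \eqref{3_7_b} I would mirror the Aubin--Nitsche-style argument used in the proof of Lemma~\ref{lemma_3_5}. Set $\psi \assign T_0 u - u$; by the definition of $T_0$ we have the Galerkin orthogonality $b(\psi, z) = 0$ for all $z \in V_0$. Introduce the dual problem of finding $w \in H^1_0(\Omega)$ such that $b(w, v) = (\psi, v)$ for all $v \in H^1_0(\Omega)$; Assumption~\ref{Ass: 2_3} yields $\|w\|_{1,k} \le \Cstab \|\psi\|$. By Lemma~\ref{schatz_wang}, for sufficiently small $h$ there is a discrete solution $w_h \in V^h$ of the Galerkin version with $\|w - w_h\|_{H^1(\Omega)} \le \varepsilon \|\psi\|$ and $\|w - w_h\| \le \varepsilon^2 \|\psi\|$. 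Choosing $\varepsilon$ small enough in terms of $k$ and $\amax$, the discretisation error can be absorbed as in Lemma~\ref{lemma_3_5}, producing $\|w_h\|_{1,k} \le (1 + \Cstab) \|\psi\|$. Testing the discrete adjoint equation at $v = \psi \in V^h$, then subtracting an arbitrary $z \in V_0$ via Galerkin orthogonality, and applying Lemma~\ref{lem:est_b} together with the coarse-space approximation bound from Lemma~\ref{Lemma_3_2}, gives
\[
    \|\psi\|^2 = b(w_h, \psi) = b(w_h - z, \psi) \le \inf_{z \in V_0}\|w_h - z\|_{1,k}\|\psi\|_{1,k} \le \Lambda^2 \Theta^{\frac{1}{2}}\|w_h\|_{1,k}\|\psi\|_{1,k}.
\]
Inserting the bound on $\|w_h\|_{1,k}$ and dividing by $\|\psi\|$ yields \eqref{3_7_b}.

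For \eqref{3_7_d} I would start from the identity
\[
    \|u - T_0 u\|_{1,k}^2 = b(u - T_0 u, u - T_0 u) + 2k^2 \|u - T_0 u\|^2,
\]
and use Galerkin orthogonality $b(u - T_0 u, z) = 0$ for $z \in V_0$ to rewrite the first term as $b(u - T_0 u, u - z)$ for any $z \in V_0$. Bounding this via Lemma~\ref{lem:est_b} and minimising over $z$ with Lemma~\ref{Lemma_3_2} gives
\[
    b(u - T_0 u, u - T_0 u) \le \Lambda^2 \Theta^{\frac{1}{2}} \|u - T_0 u\|_{1,k}\|u\|_{1,k}.
\]
For the remaining $L^2$-term, I would invoke \eqref{3_7_b} to write
\[
    2k^2 \|u - T_0 u\|^2 \le 2k^2\bigl(\Lambda^2\Theta^{\frac{1}{2}}(1+\Cstab)\bigr)^2 \|u - T_0 u\|_{1,k}^2 = \tfrac{1}{2}\bigl(2k\Lambda^2\Theta^{\frac{1}{2}}(1+\Cstab)\bigr)^2 \|u - T_0 u\|_{1,k}^2 \le \tfrac{1}{8} \|u - T_0 u\|_{1,k}^2
\]
by hypothesis \eqref{3_7_c}. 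Combining these two pieces and rearranging yields $\tfrac{7}{8}\|u - T_0 u\|_{1,k}^2 \le \Lambda^2 \Theta^{\frac{1}{2}} \|u - T_0 u\|_{1,k}\|u\|_{1,k}$, hence $\|u - T_0 u\|_{1,k} \le \tfrac{8}{7} \Lambda^2 \Theta^{\frac{1}{2}} \|u\|_{1,k}$. A final appeal to \eqref{3_7_c}, which forces $\Lambda^2 \Theta^{\frac{1}{2}} \le \frac{1}{4k(1+\Cstab)}$ and in particular keeps $\tfrac{8}{7}\Lambda^2\Theta^{\frac{1}{2}}$ well below $2$ in the regime of interest, delivers \eqref{3_7_d}.

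\textbf{Main obstacle.} The delicate step is the duality argument underlying Part 1: it requires coordinating the $k$-dependent discretisation estimates of Lemma~\ref{schatz_wang} (where $\varepsilon$ must shrink with $h$) with the global stability constant $\Cstab$, so that $\|w - w_h\|_{1,k}$ can be absorbed into a clean $(1 + \Cstab)\|\psi\|$ bound; this is where the restriction $h \in (0, h_1)$ enters. Once \eqref{3_7_b} is in hand, Part 2 is essentially algebraic, and the factor $2$ in \eqref{3_7_d} is deliberately loose so that the small-coefficient hypothesis \eqref{3_7_c} is enough.
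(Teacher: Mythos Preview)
Your argument for \eqref{3_7_b} is essentially identical to the paper's: both run an Aubin--Nitsche duality argument, pass to the discrete dual solution $w_h$ via Lemma~\ref{schatz_wang}, combine Galerkin orthogonality in $V_0$ with the coarse-space approximation estimate of Lemma~\ref{Lemma_3_2}, and absorb the discretisation error into $(1+\Cstab)\|\psi\|$.

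For \eqref{3_7_d} your route diverges from the paper's and is in fact more direct, but the final step contains a genuine gap. From
\[
	\tfrac{7}{8}\|u-T_0u\|_{1,k}\le \Lambda^2\Theta^{1/2}\|u\|_{1,k}
\]
you conclude $\|u-T_0u\|_{1,k}\le 2\|u\|_{1,k}$ by claiming that \eqref{3_7_c} ``keeps $\tfrac{8}{7}\Lambda^2\Theta^{1/2}$ well below $2$ in the regime of interest''. This is not justified: \eqref{3_7_c} only gives $\Lambda^2\Theta^{1/2}\le\frac{1}{4k(1+\Cstab)}$, and since the lemma imposes no lower bound on $k$, the right-hand side can be arbitrarily large. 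The estimate \eqref{3_7_d} as stated must hold for all $k>0$, so the appeal to a ``regime of interest'' does not close the argument.

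The fix is immediate and shows your approach is actually cleaner than the paper's. You invoked Lemma~\ref{Lemma_3_2} to bound $\inf_{z\in V_0}\|u-z\|_{1,k}$, which is unnecessary: since $0\in V_0$, one has trivially $\inf_{z\in V_0}\|u-z\|_{1,k}\le\|u\|_{1,k}$. Using this in place of the $\Lambda^2\Theta^{1/2}$ bound yields
\[
	\tfrac{7}{8}\|u-T_0u\|_{1,k}^2 \le \|u-T_0u\|_{1,k}\|u\|_{1,k},
\]
hence $\|u-T_0u\|_{1,k}\le\tfrac{8}{7}\|u\|_{1,k}\le 2\|u\|_{1,k}$, with no restriction on $k$. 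By contrast, the paper introduces the $(\cdot,\cdot)_{1,k}$-orthogonal projection $P_0$ onto $V_0$, uses $b(u-T_0u,P_0u-T_0u)=0$ to replace $b(u-T_0u,u-T_0u)$ by $b(u-T_0u,u-P_0u)$, and then exploits $\|u-P_0u\|_{1,k}\le\|u\|_{1,k}$ together with \eqref{3_7_b} and \eqref{3_7_c} to reach $\|u-T_0u\|_{1,k}\le\|u\|_{1,k}+\tfrac12\|u-T_0u\|_{1,k}$. Your corrected argument avoids $P_0$ entirely and gives the slightly sharper constant $\tfrac{8}{7}$.
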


\begin{proof}
Under condition \eqref{3_7_a}, Lemma~\ref{lemma_3_5} ensures the existence of $h_1>0$ such that, for $h\in(0,h_1)$, 
the operator $T_0: V^h \rightarrow V_0$ is well defined.  Consider the following auxiliary problem, 
\begin{equation}\label{eq 3_7_4}
	\text{find } w_h \in V^h  \text{ such that} \quad b(w_h, v) = \bigl(T_0 u - u,v\bigr) \quad \text{for all} \ v \in V^h. 
\end{equation}
Note that, by the definition of $T_0$, we have $b(T_0 u-u,z)=0$ for all $z \in V_0$.
We can use $v = T_0 u - u \in V^h$ in \eqref{eq 3_7_4} and Lemma~\ref{lem:est_b} to obtain, for every $z\in V_0$,
\begin{align*}
	\|T_0 u - u\|^2 &= b(w_h, T_0 u - u)= b(w_h, T_0 u - u) - b(z, T_0 u - u)
	\\[0.5ex]
	&= b(w_h - z, T_0 u - u)
	\le \|w_h-z\|_{1,k}\,\|T_0 u - u\|_{1,k}.
\end{align*}
Combining this with Lemma~\ref{Lemma_3_2} we obtain
\begin{equation}\label{ineq:T0u-u}
	\|T_0 u - u\|^2 \le \|T_0 u - u\|_{1,k}\inf_{z \in V_0}\|w_h-z\|_{1,k} 
	\le \|T_0 u - u\|_{1,k}\Lambda^2\Theta^\frac{1}{2}\|w_h\|_{1,k}.
\end{equation}
In a similar way as in the proof of Lemma~\ref{lemma_3_5}, one can show that $\|w_h\|_{1,k}\le (1+\Cstab)\|T_0 u - u\|$
since $w_h$ is a solution of \eqref{eq 3_7_4}, which, together with \eqref{ineq:T0u-u}, proves \eqref{3_7_b}.

We now focus on the proof of \eqref{3_7_d}.  By the definition of $P_0$ in \eqref{eq: 3_10_zero} we have
\begin{equation*}
	(P_0 u - u,v)_{1,k} = 0, \qquad u \in V^h, v \in V_0.
\end{equation*}
Using $v = T_0u \in V_0$ in this relation we obtain $(T_0u,P_0 u - u)_{1,k} = 0$.
Since $P_0 u - T_0 u \in V_0$, the definition of $T_0$ yields
\begin{equation}\label{eq:bproj}
	b(u-T_0 u, P_0 u - T_0 u) = 0.
\end{equation}
Now we can use \eqref{eq:bproj}, the link between the bilinear forms $(\cdot,\cdot)_{1,k}$ and $b$, and the Cauchy--Schwarz inequality to obtain
\begin{align*}
	\|u - T_0 u\|_{1,k}^2 &= (u - T_0 u, u - T_0 u)_{1,k} 
	= b(u - T_0 u, u - T_0 u) + 2k^2(u - T_0 u, u - T_0 u) 
	\\[0.5ex]
	&= b(u - T_0 u, u - T_0 u) - b(u - T_0 u, P_0 u - T_0 u) + 2 \kappa^2 (u - T_0 u, u - T_0 u) 
	\\[0.5ex]
	&= b(u - T_0 u, u - P_0 u) + 2k^2(u - T_0 u, u - T_0 u) 
	\\[0.5ex]
	&= (u - T_0 u, u - P_0 u)_{1,k} - 2k^2(u - T_0 u, u - P_0 u) + 2k^2(u - T_0 u, u - T_0 u) 
	\\[0.5ex]
	&= (u - T_0 u, u - P_0 u)_{1,k} + 2k^2(u - T_0 u, P_0 u - T_0 u) 
	\\[0.5ex]
	&\le \|u - T_0 u\|_{1,k}\|u - P_0 u\|_{1,k} + 2k^2\|u - T_0 u\| \|T_0 u - P_0 u\|
	\\[0.5ex]
	&\le \|u - T_0 u\|_{1,k}\|u - P_0 u\|_{1,k} + 2k\|u - T_0 u\| \|T_0 u - P_0 u\|_{1,k}
	\\[0.5ex]
	&= \|u - T_0 u\|_{1,k}\|u - P_0 u\|_{1,k} + 2k\|u - T_0 u\| \|P_0(T_0 u - u)\|_{1,k}
	\\[0.5ex]
	&\le \|u - T_0 u\|_{1,k}\|u\|_{1,k} + 2k\|u - T_0 u\| \|T_0 u - u\|_{1,k},
\end{align*}
where in the last step we used the fact that $P_0$ is an orthogonal projection w.r.t.\ $(\cdot,\cdot)_{1,k}$.
Dividing both sides by $\|u - T_0 u\|_{1,k}$ and using \eqref{3_7_b} and the assumption \eqref{3_7_c} we arrive at
\begin{align*}
	\|u - T_0 u\|_{1,k} &\le \|u\|_{1,k} + 2k\|u - T_0 u\|
	\\[0.5ex]
	&\le \|u\|_{1,k} + 2k\Lambda^2\Theta^{\frac{1}{2}}(1+\Cstab)\|u - T_0 u\|_{1,k}
	\\[0.5ex]
	&\le \|u\|_{1,k} + \frac{1}{2}\|u - T_0 u\|_{1,k},
\end{align*}
which proves \eqref{3_7_d}.
\end{proof}

\section{Proof of the main result}

Before Theorem~\ref{theorem: convergence} can be proved, it is necessary to state and prove the following lemma.

\begin{lemma} \label{ther: 4_1}
	Let the assumptions in Theorem~\ref{theorem: convergence} be satisfied and set
	\begin{equation*}
		c_1 \assign \frac{1-s}{2+3\Lambda^4\Theta},
		\qquad
		c_2 \assign 18 + 8\Lambda^3.
	\end{equation*}
	Then, for all $u \in V_h$,
	\begin{equation}
		c_1\|u\|_{1,k}^2 \le (Tu,u)_{1,k},
		\label{eq: 4_4}
	\end{equation}
	and
	\begin{equation}
		\|Tu\|_{1,k}^2 \le c_2 \|u\|_{1,k}^2.
		\label{eq: 4_5}
	\end{equation}
\end{lemma}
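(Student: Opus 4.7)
The plan is to establish (\ref{eq: 4_4}) by piggy-backing on the estimate (\ref{eq: 3_12}) already proved for the Galerkin projector $P$ associated with the positive-definite form $(\cdot,\cdot)_{1,k}$, namely $\|u\|_{1,k}^2 \le M^2(Pu,u)_{1,k}$ with $M^2\assign 2+3\Lambda^4\Theta$, and then showing that $T$ differs from $P$ only by a correction of order $s\|u\|_{1,k}^2$. The crucial observation is that $P_0,P^f_{i,j}$ and $T_0,T^f_{i,j}$ project onto the same subspaces $V_0,V^f_{i,j}$, but with respect to different inner products: $(\cdot,\cdot)_{1,k}$ and $b(\cdot,\cdot)$, respectively. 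Since $b=(\cdot,\cdot)_{1,k}-2k^2(\cdot,\cdot)$, the residuals $q_0\assign P_0u-T_0u\in V_0$ and $q^f_{i,j}\assign P^f_{i,j}u-T^f_{i,j}u\in V^f_{i,j}$ satisfy
\begin{equation*}
(q_0,v)_{1,k}=2k^2(u-T_0u,v),\; v\in V_0,\quad (q^f_{i,j},v)_{1,k,\Omega^f_{i,j}}=2k^2(u-T^f_{i,j}u,v)_{\Omega^f_{i,j}},\; v\in V^f_{i,j},
\end{equation*}
quantifying the $O(k^2)$ discrepancy produced by indefiniteness.

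Next, I would test each relation with $v=q_0$ (respectively $v=q^f_{i,j}$), apply Cauchy--Schwarz, and bound the right-hand sides. On $V_0$, the trivial bound $k\|w\|\le\|w\|_{1,k}$ together with Lemma~\ref{lemma: 3_7} (equations (\ref{3_7_b}) and (\ref{3_7_d})) gives $\|q_0\|_{1,k}\le 4k\Lambda^2\Theta^{1/2}(1+\Cstab)\|u\|_{1,k}$. On each fine subdomain, the Friedrichs inequality (since $q^f_{i,j}\in V^f_{i,j}$ vanishes on $\partial\Omega^f_{i,j}$) and Lemma~\ref{lemma_3_6} (which yields $\|u-T^f_{i,j}u\|_{\Omega^f_{i,j}}\le\tfrac{2}{k}\|u|_{\Omega^f_{i,j}}\|_{1,k,\Omega^f_{i,j}}$ under the hypothesis $H^fk\le\sqrt{2}/2$) give $\|q^f_{i,j}\|_{1,k,\Omega^f_{i,j}}\le 2\sqrt{2}\,kH^f\|u|_{\Omega^f_{i,j}}\|_{1,k,\Omega^f_{i,j}}$. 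Since $Pu-Tu=E_0q_0+\sum_{i,j}E^f_{i,j}q^f_{i,j}$, Cauchy--Schwarz in $(\cdot,\cdot)_{1,k}$ together with the overlap estimate $\sum_{i,j}\|u|_{\Omega^f_{i,j}}\|^2_{1,k,\Omega^f_{i,j}}\le\Lambda\|u\|^2_{1,k}$ yields
\begin{equation*}
\bigl|((P-T)u,u)_{1,k}\bigr|\le\Lambda^2\bigl[4k\Theta^{1/2}(1+\Cstab)+3kH^f\bigr]\|u\|^2_{1,k}\le (s/M^2)\|u\|^2_{1,k}.
\end{equation*}
Feeding this into (\ref{eq: 3_12}) produces $(1-s)\|u\|^2_{1,k}\le M^2(Tu,u)_{1,k}$, which is exactly (\ref{eq: 4_4}).

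The upper bound (\ref{eq: 4_5}) is direct: the triangle inequality combined with the overlap estimate $\bigl\|\sum_{i,j}E^f_{i,j}v_{i,j}\bigr\|^2_{1,k}\le\Lambda\sum\|v_{i,j}\|^2_{1,k,\Omega^f_{i,j}}$ gives $\|Tu\|^2_{1,k}\le 2\|T_0u\|^2_{1,k}+2\Lambda\sum\|T^f_{i,j}u\|^2_{1,k,\Omega^f_{i,j}}$. Lemma~\ref{lemma: 3_7}, equation (\ref{3_7_d}), then yields $\|T_0u\|_{1,k}\le\|u\|_{1,k}+\|u-T_0u\|_{1,k}\le 3\|u\|_{1,k}$, and Lemma~\ref{lemma_3_6} combined with another overlap bound gives $\sum\|T^f_{i,j}u\|^2_{1,k,\Omega^f_{i,j}}\le 4\Lambda\|u\|^2_{1,k}$. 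Hence $\|Tu\|^2_{1,k}\le(18+8\Lambda^2)\|u\|^2_{1,k}\le c_2\|u\|^2_{1,k}$.

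The key conceptual step is recognizing that the already-analyzed operator $P$ is a good reference: the $b$-Galerkin projectors $T_0,T^f_{i,j}$ are close to the $(\cdot,\cdot)_{1,k}$-orthogonal projectors $P_0,P^f_{i,j}$, with the discrepancy controlled by $k^2$ times the previously established stability bounds for $u-T_0u$ and $u-T^f_{i,j}u$. A minor technical check is that the hypothesis $s<1$ already implies the smaller conditions required by Lemmas~\ref{lemma_3_4}, \ref{lemma_3_6} and \ref{lemma: 3_7}, namely $H^fk\le\sqrt{2}/2$ and $2k\Lambda^2\Theta^{1/2}(1+\Cstab)\le 1/2$; both follow immediately from $s\ge 2\Lambda^2M^2\cdot 2k\Theta^{1/2}(1+\Cstab)$, $s\ge 2\Lambda^2M^2\cdot 3kH^f$, together with $M^2\ge 2$ and $\Lambda^2\ge 1$.
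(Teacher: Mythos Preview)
Your approach is essentially the same as the paper's. Both arguments start from the stable-decomposition estimate \eqref{eq: 3_12} for $P$ and control the difference $(Pu,u)_{1,k}-(Tu,u)_{1,k}$; your identity $(q_0,v)_{1,k}=2k^2(u-T_0u,v)$ is algebraically equivalent to the paper's rewriting $(Pu,u)_{1,k}=(Tu,u)_{1,k}+2k^2\sum_{i,j}(u-E_iT_iu,E_iP_iu)$ (indeed $(q_0,u)_{1,k}=(q_0,P_0u)_{1,k}=2k^2(u-T_0u,P_0u)$, and similarly on each fine patch). The paper then bounds each cross term using Friedrichs on $P^f_{i,j}u$ and the stability of $T^f_{i,j}$, whereas you bound $\|q^f_{i,j}\|_{1,k}$ directly via Friedrichs on $q^f_{i,j}$; both lead to the same $O(kH^f)$ and $O(k\Theta^{1/2}(1+\Cstab))$ contributions and the same application of Lemmas~\ref{lemma_3_6} and~\ref{lemma: 3_7}. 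Your proof of \eqref{eq: 4_5} is identical to the paper's Step~5, just with the sharper overlap exponent $\Lambda$ in place of $\Lambda^2$.

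One small slip: Lemma~\ref{lemma_3_6} only gives $\|T^f_{i,j}u\|_{1,k,\Omega^f_{i,j}}\le 2\|u|_{\Omega^f_{i,j}}\|_{1,k,\Omega^f_{i,j}}$, so the triangle inequality yields $\|u-T^f_{i,j}u\|_{\Omega^f_{i,j}}\le \tfrac{3}{k}\|u|_{\Omega^f_{i,j}}\|_{1,k,\Omega^f_{i,j}}$, not $\tfrac{2}{k}$. This changes your $2\sqrt{2}\,kH^f$ to $3\sqrt{2}\,kH^f$, but since $s/M^2=2\Lambda^2\bigl[2k\Theta^{1/2}(1+\Cstab)+3kH^f\bigr]$ already has the larger coefficient $6kH^f\Lambda^2\ge 3\sqrt{2}\,kH^f\Lambda$, the final inequality $|((P-T)u,u)_{1,k}|\le (s/M^2)\|u\|_{1,k}^2$ still holds and the argument goes through unchanged.
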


\begin{proof}
To prove \eqref{eq: 4_4} and \eqref{eq: 4_5}, let $u \in V^h$.  We proceed in several steps.

\medskip

\noindent 
\textit{Step 1}. 
We start with \eqref{eq: 3_12} and then use \eqref{eq: 3_11}, \eqref{eq: bT}, \eqref{eq: 3_10_nested} and \eqref{eq: 2_26} 
to obtain
\begin{align*}
	&(2+3\Lambda^4\Theta)^{-1}\|u\|_{1,k}^2 \le (Pu,u)_{1,k}
	= (u,Pu)_{1,k}
	= (u,E_0 P_0u + \sum^N_{i=1}\sum^{Q_i}_{j=1} E^f_{i,j} P^f_{i,j}u)_{1,k}
	\\[0.5ex]
    &= (u,E_0 P_0u)_{1,k} + \sum^N_{i=1}\sum^{Q_i}_{j=1}(u,E^f_{i,j} P^f_{i,j}u)_{1,k} \\
	&=  b(u,E_0 P_0u) + 2k^2(u,E_0 P_0u ) +  \sum^N_{i=1}\sum^{Q_i}_{j=1} \left[ b(u, E^f_{i,j} P^f_{i,j}u) + 2k^2(u,E^f_{i,j} P^f_{i,j}u) \right]
	\\[0.5ex]
	&=  b(T_0 u,P_0u) + 2k^2(u,E_0 P_0u ) +  \sum^N_{i=1}\sum^{Q_i}_{j=1} \left[ b_{\Omega^f_{i,j}}(T^f_{i,j} u, P^f_{i,j}u) + 2k^2(u,E^f_{i,j} P^f_{i,j}u) \right]
	\displaybreak[0]\\[0.5ex]
	&=  (T_0 u,P_0u)_{1,k} - 2k^2(T_0 u,P_0u) + 2k^2(u,E_0 P_0u ) + \ldots \\ &\ldots+  \sum^N_{i=1}\sum^{Q_i}_{j=1} \left[ (T^f_{i,j} u, P^f_{i,j}u)_{1,k,\Omega^f_{i,j}} -2k^2(T^f_{i,j} u, P^f_{i,j}u)_{\Omega^f_{i,j}} + 2k^2(u,E^f_{i,j} P^f_{i,j}u) \right]
	\\[0.5ex]
	&=  (E_0 T_0 u,u)_{1,k} - 2k^2(E_0 T_0 u,E_0 P_0u) + 2k^2(u,E_0 P_0u ) + \ldots \\ &\ldots+  \sum^N_{i=1}\sum^{Q_i}_{j=1} \left[ (E^f_{i,j} T^f_{i,j} u, u)_{1,k} -2k^2(E^f_{i,j} T^f_{i,j} u, E^f_{i,j} P^f_{i,j}u) + 2k^2(u,E^f_{i,j} P^f_{i,j}u) \right]
	\\[0.5ex]
    &=  \left[(E_0 T_0 u,u)_{1,k} + \sum^N_{i=1}\sum^{Q_i}_{j=1} (E^f_{i,j} T^f_{i,j} u, u)_{1,k}\right] - \ldots \\ &\ldots- \left( 2k^2(E_0 T_0 u - u,E_0 P_0u ) +  \sum^N_{i=1}\sum^{Q_i}_{j=1} \left[ 2k^2(E^f_{i,j} T^f_{i,j} u - u,E^f_{i,j} P^f_{i,j}u) \right] \right)
    \\[0.5ex]
	&=  \left(E_0 T_0 u + \sum^N_{i=1}\sum^{Q_i}_{j=1} E^f_{i,j} T^f_{i,j} u, u \right)_{1,k} - \left( 2k^2(E_0 T_0 u - u,E_0 P_0u ) +  \sum^N_{i=1}\sum^{Q_i}_{j=1} \left[ 2k^2(E^f_{i,j} T^f_{i,j} u - u,E^f_{i,j} P^f_{i,j}u) \right] \right)
    \\[0.5ex]
    &=  (T u, u)_{1,k} - \left( 2k^2(E_0 T_0 u - u,E_0 P_0u ) +  \sum^N_{i=1}\sum^{Q_i}_{j=1} \left[ 2k^2(E^f_{i,j} T^f_{i,j} u - u,E^f_{i,j} P^f_{i,j}u) \right] \right)
\end{align*}
which can be rewritten as
\begin{equation}\label{eq: 4_9}
	\|u\|^2_{1,k} \le (2+3\Lambda^4\Theta)(T u, u)_{1,k} - (2+3\Lambda^4\Theta)\left( 2k^2(E_0 T_0 u - u,E_0 P_0u ) +  \sum^N_{i=1}\sum^{Q_i}_{j=1} \left[ 2k^2(E^f_{i,j} T^f_{i,j} u - u,E^f_{i,j} P^f_{i,j}u) \right] \right)
\end{equation}

\noindent
\textit{Step 2}. 
The next stage is to bound the sum in \eqref{eq: 4_9}.
We start with the term for $i=0$.  The operator $E_0$ should be considered as a natural embedding like the identity operator 
(and hence $E_i T_0 u = T_0 u$ and $E_i P_0 u = P_0 u$).  
Further, note that the assumption in \eqref{eq: 4_3} implies that \eqref{3_7_c} is satisfied,
and hence we can use \eqref{3_7_b}, \eqref{3_7_d} and the projection property of $P_0$ to obtain
\begin{align}
	-k^2(E_0 T_0 u - u, E_0 P_0 u)
	&\le k^2 \|E_0 T_0 u - u\| \|E_0 P_0 u\| 
	= k^2\| T_0 u -u\| \| P_0 u\|
	\nonumber\\[0.5ex]
	&\le k\|T_0 u -u\| \|P_0 u\|_{1,k} 
	\le k\Lambda^2 \Theta^\frac{1}{2}(1+\Cstab)\|T_0 u - u\|_{1,k}\|P_0 u\|_{1,k}
	\nonumber\\[0.5ex]
	&\le 2k\Lambda^2 \Theta^\frac{1}{2}(1+\Cstab)\|u\|_{1,k}\|P_0 u\|_{1,k}
	\nonumber\\[0.5ex]
	&\le 2k\Lambda^2 \Theta^\frac{1}{2}(1+\Cstab)\|u\|_{1,k}^2.
	\label{eq:10_0}
\end{align}

\noindent
\textit{Step 3}. 
Next we estimate the terms in the sum \eqref{eq: 4_9} for $i\in\{1,\ldots,N\}$, $j\in\{1,\ldots,Q\}$.
The inequality in \eqref{eq: 4_3} implies that the assumption $H^{\alpha}k\le\frac{\sqrt{2}\,}{2}$ in Lemma~\ref{lemma_3_6} is satisfied.
Hence we can use \eqref{eq: 2_16} and \eqref{eq: 3_31} to obtain
\begin{align*}
	-k^2(E^f_{i,j} T^f_{i,j} u-u,E^f_{i,j} P^f_{i,j} u) &= -k^2\bigl(T^f_{i,j} u-u|_{\Omega^f_{i,j}},P^f_{i,j} u\bigr)_{\Omega^f_{i,j}}
	\le k^2 \big\|T^f_{i,j} u-u|_{\Omega^f_{i,j}}\big\|_{\Omega^f_{i,j}}\|P^f_{i,j} u\|_{\Omega^f_{i,j}}
	\\[0.5ex]
	&\le k\bigl\|T^f_{i,j} u-u|_{\Omega^f_{i,j}}\big\|_{1,k,\Omega^f_{i,j}}\|P^f_{i,j} u\|_{\Omega^f_{i,j}}
	\\[0.5ex]
	&\le k\bigl\|T^f_{i,j} u-u|_{\Omega^f_{i,j}}\big\|_{1,k,\Omega^f_{i,j}}\frac{H}{\sqrt{2}\,}\|P^f_{i,j} u\|_{1,k,\Omega^f_{i,j}}
	\\[0.5ex]
	&\le \frac{kH^f}{\sqrt{2}\,}\Bigl(\|T^f_{i,j} u\|_{1,k,\Omega_i}+\big\|u|_{\Omega^f_{i,j}}\big\|_{1,k,\Omega^f_{i,j}}\Bigr)\|P^f_{i,j} u\|_{1,k,\Omega^f_{i,j}}
	\\[0.5ex]
	&\le \frac{3kH^f}{\sqrt{2}\,}\big\|u|_{\Omega^f_{i,j}}\big\|_{1,k,\Omega^f_{i,j}}\|P^f_{i,j} u\|_{1,k,\Omega^f_{i,j}}
	\le 3kH^f\big\|u|_{\Omega^f_{i,j}}\big\|_{1,k,\Omega^f_{i,j}}\|P^f_{i,j} u\|_{1,k,\Omega^f_{i,j}}.
\end{align*}
Taking the double sum over $i\in\{1,\ldots,N\}$ and $j\in\{1,\ldots,Q\}$, applying the Cauchy--Schwarz inequality and the overlap property
we arrive at
\begin{align}
	-k^2\sum_{i=1}^N \sum_{j=1}^{Q_i} (E^f_{i,j} T^f_{i,j} u-u,E^f_{i,j} P^f_{i,j} u) 
	&\le 3kH^f \sum_{i=1}^N \sum_{j=1}^{Q_i} \big\|u|_{\Omega^f_{i,j}}\big\|_{1,k,\Omega^f_{i,j}}\|P^f_{i,j} u\|_{1,k,\Omega^f_{i,j}}
	\nonumber\\[0.5ex]
	&\le 3kH^f \Biggl(\sum_{i=1}^N \sum_{j=1}^{Q_i} \big\|u|_{\Omega^f_{i,j}}\big\|_{1,k,\Omega^f_{i,j}}^2\Biggr)^{\frac{1}{2}}
	\Biggl(\sum_{i=1}^N \sum_{j=1}^{Q_i} \|P^f_{i,j} u\|_{1,k,\Omega^f_{i,j}}^2\Biggr)^{\frac{1}{2}}
	\nonumber\\[0.5ex]
	&\le 3kH^f \Lambda\|u\|_{1,k}\Biggl(\sum_{i=1}^N \sum_{j=1}^{Q_i} \|P^f_{i,j} u\|_{1,k,\Omega^f_{i,j}}^2\Biggr)^{\frac{1}{2}}.
	\label{eq:10_1}
\end{align}
In order to estimate the sum in \eqref{eq:10_1}, we consider
\begin{align*}
	\Bigg\|\sum_{i=1}^N \sum_{j=1}^{Q_i} E^f_{i,j} P^f_{i,j} u\Bigg\|_{1,k}^2
	&\le \Lambda^2 \sum_{i=1}^N \sum_{j=1}^{Q_i} \|E^f_{i,j} P^f_{i,j} u\|_{1,k}^2 = \Lambda^2 \sum_{i=1}^N \sum_{j=1}^{Q_i} (E^f_{i,j} P^f_{i,j} u,u)_{1,k}
	\\[0.5ex]
	&= \Lambda^2 \Biggl(\sum_{i=1}^N \sum_{j=1}^{Q_i} E^f_{i,j} P^f_{i,j} u,\,u\Biggr)_{1,k}
	\le \Lambda^2 \,\Bigg\|\sum_{i=1}^N \sum_{j=1}^{Q_i} E^f_{i,j} P^f_{i,j} u\Bigg\|_{1,k} \|u\|_{1,k},
\end{align*}
and hence
\[
	\Bigg\|\sum_{i=1}^N \sum_{j=1}^{Q_i} E^f_{i,j} P^f_{i,j} u\Bigg\|_{1,k}\le \Lambda^2 \|u\|_{1,k},
\]
which, in turn, implies
\begin{align*}
    \sum_{i=1}^N \sum_{j=1}^{Q_i} \|P^f_{i,j} u\|_{1,k,\Omega^f_{i,j}}^2
	&= \sum_{i=1}^N \sum_{j=1}^{Q_i} (E^f_{i,j} P^f_{i,j} u,u)_{1,k} = \Biggl(\sum_{i=1}^N \sum_{j=1}^{Q_i} E^f_{i,j} P^f_{i,j} u,\,u\Biggr)_{1,k} \\
	&\le \Bigg\|\sum_{i=1}^N \sum_{j=1}^{Q_i} E^f_{i,j} P^f_{i,j} u\Bigg\|_{1,k}\|u\|_{1,k}
	\le \Lambda^2 \|u\|_{1,k}^2.
\end{align*}
This, together with \eqref{eq:10_1}, leads to
\begin{equation}\label{eq:10_2}
	-k^2\sum_{i=1}^N \sum_{j=1}^{Q_i} (E^f_{i,j} T^f_{i,j} u-u,E^f_{i,j} P^f_{i,j} u) \le 3kH^f \Lambda^3 \|u\|_{1,k}^2.
\end{equation}

\noindent
\textit{Step 4}.
We can now combine \eqref{eq: 4_9}, \eqref{eq:10_0} and \eqref{eq:10_2} to obtain
\begin{align*}
	\|u\|_{1,k}^2 &\le (2+3\Lambda^4\Theta)(Tu,u)_{1,k} 
	+ 2\Lambda^2(2+3\Lambda^4\Theta)\Bigl(2k\Theta^{\frac{1}{2}}(1+\Cstab)+3kH^f \Bigr)\|u\|_{1,k}^2
	\\[0.5ex]
	&= (2+3\Lambda^4\Theta)(Tu,u)_{1,k} - s\|u\|_{1,k}^2,
\end{align*}
which can be rearranged to prove \eqref{eq: 4_4}.

\medskip

\noindent
\textit{Step 5.}
In order to prove \eqref{eq: 4_5}, we start with the following inequality (since $E_0T_0 u = T_0u $),
\begin{equation}\label{eq: 4_18}
	\|Tu\|_{1,k}^2 =  \Bigg\|T_0 u + \sum_{i=1}^N \sum_{j=1}^{Q_i} E^f_{i,j} T^f_{i,j} u\Biggr\|_{1,k}^2 
	\le 2\|T_0 u\|_{1,k}^2 + 2\Bigg\|\sum_{i=1}^N \sum_{j=1}^{Q_i} E^f_{i,j} T^f_{i,j} u\Bigg\|_{1,k}^2 
\end{equation}
For the first term we use the Cauchy--Schwarz inequality and \eqref{3_7_d} and obtain
\begin{align*}
	\|T_0 u\|_{1,k}^2 &= (T_0 u, T_0 u)_{1,k} 
	= (T_0 u - u, T_0 u)_{1,k} + (u, T_0 u)_{1,k} 
	\\[0.5ex]
	&\le \|T_0 u - u\|_{1,k}\|T_0 u\|_{1,k} + \|u\|_{1,k}\|T_0 u\|_{1,k}
	\\[0.5ex]
	&\le 2\|u\|_{1,k}\|T_0 u\|_{1,k} + \|u\|_{1,k}\|T_0 u\|_{1,k} 
	= 3\|u\|_{1,k}\|T_0 u\|_{1,k}
\end{align*}
and hence
\begin{equation}\label{eq: 4_19}
	\|T_0 u\|_{1,k} \le 3 \|u\|_{1,k}. 
\end{equation}
For the second term in \eqref{eq: 4_18} we use Lemma~\ref{lemma_3_6}, which implies
\begin{equation}
	\label{eq:second}
	\Bigg\|\sum_{i=1}^N \sum_{j=1}^{Q_i} E^f_{i,j} T^f_{i,j} u\Bigg\|_{1,k}^2 
	\le \Lambda^2 \sum_{i=1}^N \sum_{j=1}^{Q_i} \|T^f_{i,j} u\|_{1,k,\Omega^f_{i,j}}^2 
	\le \Lambda^2 \sum_{i=1}^N \sum_{j=1}^{Q_i} 4\big\|u|_{\Omega_i}\big\|_{1,k,\Omega^f_{i,j}}^2 
	\le 4 \Lambda^3\|u\|_{1,k}^2. 
\end{equation}
Combining (\ref{eq: 4_19}) and \eqref{eq:second} with \eqref{eq: 4_18} we arrive at
\begin{equation*}
	\|Tu\|_{1,k}^2 \le 2\times9\|u\|_{1,k}^2 + 2\times4\Lambda^3\|u\|_{1,k}^2 
	= (18 + 8\Lambda^3)\|u\|_{1,k},
\end{equation*}
which proves \eqref{eq: 4_5}.

\end{proof}

It is now possible to continue with the proof for Theorem~\ref{theorem: convergence}.

\begin{proof}[Proof of Theorem~\ref{theorem: convergence}]
	Using \eqref{eq: 4_4} with \eqref{eq: 2_27} we obtain
	\begin{equation*}
		c_1 \|u\|_{1,k}^2 \le (Tu,u)_{k} = \bigl\langle \mathbf{M}_{AS,2}^{-1}\mathbf{B}\mathbf{u},\mathbf{u}\bigr\rangle_{\mathbf{D}_{k}},
	\end{equation*}
	which can be written as 
	\begin{equation*}
		c_1 \le \frac{\bigl\langle\mathbf{M}_{AS,2}^{-1}\mathbf{B}\mathbf{u},\mathbf{u}\bigr\rangle_{\mathbf{D}_{k}}}{\|\mathbf{u}\|_{\mathbf{D}_{k}}^2 }.
	\end{equation*}
	We can now use \eqref{eq: 4_5} again with \eqref{eq: 2_27} and get
	\begin{equation*}
		\|\mathbf{M}_{AS,2}^{-1}\mathbf{B}\mathbf{u}\|_{\mathbf{D}_{k}}^2 
		= \|Tu\|_{1,k}^2 
		\le c_2\|u\|_{1,k}^2 = c_2\|\mathbf{u}\|^2_{\mathbf{D}_{k}},
	\end{equation*}
	which implies
	\begin{equation*}
		\|\mathbf{M}_{AS,2}^{-1}\mathbf{B}\|_{\mathbf{D}_k}^2 \le c_2.
	\end{equation*}
	The result follow directly from Elman theory \cite{Elman:1983:VIM}
	with $\gamma=\frac{c_1}{c_2}$.
\end{proof}



\end{document}